\newtheorem{theorem}{Theorem}[section]
\newtheorem{lemma}[theorem]{Lemma}
\newtheorem{proposition}[theorem]{Proposition}
\theoremstyle{definition}
\newtheorem{definition}[theorem]{Definition}
\theoremstyle{remark}
\newtheorem{remark}[theorem]{Remark}
\numberwithin{equation}{section}
 \def\R{{\bf{R}}}
\def\C{{\bf{C}}}
\def\N{{\bf{N}}}
\def\defequal{\stackrel{\mathrm{def}}{=}}
\def\ra{\rightarrow}
\def\sk{\smallskip}
\def\tfae{the following conditions are equivalent:}
\begin{document}
\setcounter{page}{1}


\title[Cocycles and derivations]{Groupoid cocycles and derivations}

\author[Jean Renault]{Jean Renault}

\address{$^{1}$ D\'epartment de Math\'ematiques, Universit\'e d'Orl\'eans,
45067 Orl\'eans, France.}
\email{\textcolor[rgb]{0.00,0.00,0.84}{Jean.Renault@univ-orleans.fr}}


\subjclass[2010]{Primary 46L57; Secondary 43A65, 46L08, 22A22.}

\keywords{Equivariant affine Hilbert bundles. C*-correspondences. Unbounded derivations. Non-commutative Dirichlet forms}


\begin{abstract}
This is a study of derivations constructed from conditionally negative type functions on groupoids which illustrates Sauvageot's theory of non-commutative Dirichlet forms.
\end{abstract} \maketitle

\section{Introduction and preliminaries}

\noindent The relation between group cocycles and derivations is well-known. B. Johnson's memoir \cite{joh:cohomology} is a fundamental reference for this in the framework of Banach algebras. While the classical theory  deals mostly with bounded derivations, the occurence of unbounded derivations came quickly (see for example \cite{ln:der}). Groupoid cocycles were also considered at an early stage, mainly in a measure-theoretical setting. It is observed in \cite{ren:approach} that a continuous real cocycle on a topological groupoid $G$, i.e. a continuous groupoid homomorphism $c:G\ra \R$, defines by pointwise multiplication a derivation on the C*-algebra of the groupoid (when one assumes that $G$ is locally compact and endowed with a Haar system). It is shown that this derivation is closable and that it is bounded if and only if the cocycle is bounded. In this article, we shall replace the vector space $\R$ (a trivial $G$-module) by a $G$-real Hilbert bundle. A motivation comes from Delorme and Guichardet's characterization of Kazhdan's property T for a group $G$ as a fixed point property for affine isometric actions of $G$. This is equivalent to the vanishing of the 1-cohomology group of $G$ with coefficients in an orthogonal representation of $G$. As noticed by J.-L. Tu in \cite{tu:conjecture}, the dictionary between affine isometric actions, 1-cocycles and functions conditionally of negative type still holds for a groupoid $G$, when one considers a $G$-real Hilbert bundle instead of a single $G$-real Hilbert space. We recall this dictionary in the next section, using slightly different topological assumptions and giving more complete results than those of \cite{tu:conjecture}. The next step is a C*-algebraic construction. A $G$-Hilbert bundle naturally defines a C*-correspondence of the C*-algebra of the groupoid to itself. This is a well-known construction for groups (where the authors usually construct a von Neumann correspondence rather than a C*-correspondence). This construction also appears in P.-Y. Le Gall's thesis \cite{leg:these} as part of the descent functor from $KK_G$ to the Kasparov group of the crossed product. We recall this construction in Section 3 and show that it defines a functor from the category of $G$-Hilbert bundles to the category of $C^*(G)$-correspondences which respects the product (the precise statement is Theorem \ref{functor}). We give in Section 4 the construction of the derivation defined by a cocycle with values in a $G$-Hilbert bundle and show in Theorem \ref{Sauvageot pair} that it agrees with Sauvageot's construction of \cite{sau:tangent}. We also prove that this derivation is closable.

We assume that the reader is familiar with the basic theory of groupoids and their C*-algebras, as given in \cite{ren:approach}. We assume that $G$ is a second countable and locally compact Hausdorff groupoid. We denote by $r,s: G\ra G^{(0})$ the range and source maps. Given $x\in G^{(0)}$, we define $G^x=r^{-1}(x)$. We use the notion of Banach bundle (we shall only consider Hilbert bundles) as defined in \cite{fd:representations}. We also need the notion of an affine Banach bundle; its definition is easily adapted from the definition of a Banach bundle.


\section{Positive type and conditionally negative type functions}
One can expect that the theory of positive type and conditionally negative type functions extends from groups to groupoids. In fact, the most primitive theory does not concern groups  but trivial groupoids $X\times X$, where these functions are usually called kernels. The basic ingredient is the GNS construction. These notions have appeared previously, for example in \cite{{rw:Fourier},{ren:Fourier},{pat:Fourier}} for positive type functions on groupoids and in \cite{tu:conjecture} for positive type and conditionally negative type functions on groupoids. However, the treatment of continuity is incomplete in the first set of references; J.-L. Tu gives in \cite{tu:conjecture} the main results on the subject but only a sketch of the proofs; moreover his assumptions are different from ours (he assumes the existence of continuous local sections for the range map while we assume the existence of a continuous Haar system; although most examples satisfy both assumptions, the existence of a continuous Haar system is the natural assumption in the construction of convolution algebras).

\subsection{Positive type kernels}
Four our purposes, we need to revisit the classical theory of positive type kernels. Given a locally compact Hausdorff space $X$, we denote by $C(X)$ the space of continuous complex valued functions on $X$ and by $M(X)$ the space of complex Radon measures on $X$. We add the subscript $_c$ (i.e. $C_c(X), M_c(X)$) to add the condition of compact support. We define the weak topology on $M_c(X)$ as the locally convex topology defined by the semi-norms $\alpha\mapsto |<f,\alpha>|=|\int f d\alpha|$, where $f\in C(X)$. The complex conjugate of a function $f$ [resp. a measure $\alpha$] is denoted by $f^*$ [resp. $\alpha^*$]. Recall that a complex valued function $\varphi$ on $X\times X$ (also called a kernel) is said to be of positive type if for all $n\in\N^*$ and all $x_1,\ldots,x_n\in X$, the $n$ by $n$ matrix $[\varphi(x_i,x_j)]$ is of positive type. The following result is classical (e.g. \cite[Theorem C.1.4]{bhv:T}  as a recent reference). We give its proof to introduce notations which we shall use subsequently.
\begin{theorem}\label{kernel}
Let $X$ be a locally compact Hausdorff space and let
$\varphi$ be a continuous kernel on $X\times X$
of positive type. Then,
\begin{enumerate}
\item there exists a Hilbert space $E$ and a continuous map $e:X\ra E$ such that for all $x,y\in X$, $\varphi(x,y)=(e(x)| e(y))$ and the linear span of $e(X)$ is dense in $E$.
\item the pair $(E,e)$ is unique, in the sense that if $(\underline E,\underline e)$ is another solution, there exists a unique isomorphism 
$u:E\ra\underline E$ such that $\underline e=u\circ e$.
\end{enumerate}
\end{theorem}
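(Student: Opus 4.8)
The plan is to carry out the familiar GNS / reproducing-kernel construction, organised around the space $M_c(X)$ so as to fix the notation used later. For part (i), on $M_c(X)$ define the sesquilinear form
$$\langle\mu,\nu\rangle_\varphi=\int\!\!\int \varphi(x,y)\,d\mu(x)\,d\nu^*(y),$$
conjugate-linear in $\nu$ because of the $\nu^*$. When $\mu=\sum_i c_i\delta_{x_i}$ is a finite linear combination of Dirac measures, $\langle\mu,\mu\rangle_\varphi=\sum_{i,j}c_i\overline{c_j}\varphi(x_i,x_j)\ge0$ by the positive-type hypothesis; since every $\mu\in M_c(X)$ is a weak limit of such combinations with supports in a fixed compact set and $\varphi$ is continuous, $\langle\mu,\mu\rangle_\varphi\ge0$ for all $\mu\in M_c(X)$. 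Thus $\langle\cdot,\cdot\rangle_\varphi$ is a pre-inner product; let $N=\{\mu:\langle\mu,\mu\rangle_\varphi=0\}$, which by Cauchy--Schwarz equals the subspace $\{\mu:\langle\mu,\nu\rangle_\varphi=0\ \text{for all}\ \nu\}$, let $E$ be the Hilbert space completion of $M_c(X)/N$, and set $e(x)=[\delta_x]\in E$. Then $(e(x)\,|\,e(y))=\langle\delta_x,\delta_y\rangle_\varphi=\varphi(x,y)$, and $\mathrm{span}\,e(X)$ is dense in $E$ by the same weak-approximation argument. Continuity of $e$ follows from
$$\|e(x)-e(y)\|^2=\varphi(x,x)-\varphi(x,y)-\varphi(y,x)+\varphi(y,y),$$
whose right-hand side tends to $0$ as $(x,y)\to(x_0,x_0)$ by joint continuity of $\varphi$.

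For part (ii), given another solution $(\underline E,\underline e)$ I would define $u$ on the dense subspace $\mathrm{span}\,e(X)$ by $u(\sum_i c_i e(x_i))=\sum_i c_i\,\underline e(x_i)$. The identity $\|\sum_i c_i\,\underline e(x_i)\|^2=\sum_{i,j}c_i\overline{c_j}\varphi(x_i,x_j)=\|\sum_i c_i\,e(x_i)\|^2$ shows that $u$ is well defined and isometric, hence extends to an isometry $E\to\underline E$; it is onto because $\mathrm{span}\,\underline e(X)$ is dense in $\underline E$, and $\underline e=u\circ e$ by construction. Any isomorphism satisfying $\underline e=u\circ e$ is determined on $\mathrm{span}\,e(X)$, hence equals $u$.

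I expect no real obstacle: the construction is classical. The one step that is not purely formal is the passage from positivity on finite point configurations to $\langle\mu,\mu\rangle_\varphi\ge0$, together with the matching density statement, for arbitrary $\mu\in M_c(X)$; this rests on approximating a compactly supported measure weakly by finitely supported ones --- with uniformly bounded total variation and supports in a fixed compact set --- and using the uniform continuity of $\varphi$ there. One could bypass it by building $E$ from the finitely supported measures alone, at the cost of slightly less convenient notation downstream.
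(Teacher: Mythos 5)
Your proposal is correct and follows essentially the same route as the paper: the pre-inner product on $M_c(X)$, positivity extended from finitely supported measures by weak density (with control of supports and total variation, exactly the point the paper delegates to Bourbaki), separation and completion, $e(x)=[\delta_x]$, continuity via the norm identity, and uniqueness by the isometry on the dense span. The subtlety you flag at the end is the same one the paper handles by its citation, so nothing essential is missing.
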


\begin{proof} The Hilbert space can be constructed as follows. The kernel $\varphi$ extends to a sesquilinear form $\Phi$ on $M_c(X)\times M_c(X)$ such that $\Phi(\alpha,\beta)=\langle\varphi,\alpha^*\otimes\beta\rangle$. By assumption, $\Phi(\alpha,\alpha)\ge 0$ when $\alpha$ has a finite support. By density of the measures with finite support in the weak topology (e.g. \cite[ch. III, \S 2, th. 1]{bbki:integration}), this holds for all $\alpha\in M_c(X)$. Thus $\Phi$ is a pre-inner product. After separation and completion, one obtains a Hilbert $E$ and a linear map $j:M_c(X)\ra E$ such that $(j(\alpha)|j(\beta)=\langle\varphi,\alpha^*\otimes\beta\rangle$. We define $e:X\ra E$ by $e(x)=j(\epsilon_x)$, where $\epsilon_x$ is the point mass at $x$. The equality
$$\|e(x)-e(y)\|^2=\varphi(x,x)-2{\rm Re}(\varphi(x,y))+\varphi(y,y)$$
shows that $e$ is continuous. More generally, the equality
$$\|j(\beta)-j(\alpha)\|^2=\langle\varphi,\beta^*\otimes\beta\rangle-2{\rm Re}(\langle\varphi,\alpha^*\otimes\beta\rangle)+\langle\varphi,\alpha^*\otimes\alpha\rangle$$
holds for all $\alpha, \beta\in M_c(X)$ and gives the density of  $j(E)$ whenever $E$ is a subspace of $M_c(X)$ which is dense in the weak topology. This applies in particular when $E$ is the subspace of measures with finite support and when $E$ is the subspace of the measures of the form $f\mu$, where $f\in C_c(X)$ and $\mu$ is a fixed positive Radon measure with support $X$.

For $(ii)$, we let $V$ [resp. $\underline V$] be the linear span of $e(X)$ [resp. $\underline e(X)$]. Because of the equality
$$(\sum_i\lambda_i e(x_i)| \sum_j\mu_j e(x_j))= (\sum_i\lambda_i \underline e(x_i)| \sum_j\mu_j \underline e(x_j))$$
the map $\sum_i\lambda_i e(x_i)\in V\mapsto \sum_i\lambda_i \underline e(x_i)\in\underline V$ is well defined and is isometric. It extends to an isometry $u:E\ra\underline E$. By construction, $\underline e=u\circ e$. Since this isometry is prescribed on the dense subspace $V$, it is unique.
\end{proof}

\begin{remark}\label{PT kernel} According to this theorem and its proof, a continuous kernel $\varphi$ on $X\times X$ is of positive type if and only if it satisfies one of the following equivalent properties:
\begin{itemize}
\item for all finitely supported measures $\alpha$ on $X$, $\langle\varphi,\alpha^*\otimes\alpha\rangle\,\ge 0$;
\item there exists a positive Radon measure $\mu$ such that for all $f\in C_c(X)$, $\langle\varphi,(f\mu)^*\otimes f\mu\rangle\,\ge 0$;
\item for all compactly supported measures $\alpha$ on $X$, $\langle\varphi,\alpha^*\otimes\alpha\rangle\,\ge 0$;
\item there exists a Hilbert space $E$ and a map $e:X\ra E$ such that for all $x,y\in X$, $\varphi(x,y)=(e(x)|e(y))$.
\end{itemize}

\end{remark}

\subsection{Positive type functions}

We study next continuous positive type functions on locally compact groupoids.

\begin{definition} Let $G$ be a groupoid. A complex valued function $\varphi$ defined on $G$ is said to be of positive type (abbreviated PT) if for all $x\in G^{(0)}$, the function $\varphi_x$ defined on $G^x\times G^x$ by $\varphi_x(\gamma,\gamma')=\varphi(\gamma^{-1}\gamma')$ is a positive type kernel.
\end{definition}

In the following proposition, we use the definition of a Hilbert bundle given in \cite{fd:representations}. We refer the reader to \cite{ren:bdd} for the definition of a $G$-Hilbert bundle. We use the following notation: for $x\in G^{(0)}$, $E_x$ is the fiber above $x$;  for $\gamma\in G$, $L(\gamma)$ is the linear isometry from $E_{s(\gamma)}$ to $E_{r(\gamma)}$. As said earlier, this result is well-known (see for example \cite[Theorem 1]{pat:Fourier}) but its proof is missing.

\begin{proposition}\label {PTchar} Let $(G,\lambda)$ be a locally compact groupoid with Haar system. Let $\varphi:G\ra \C$ be a continuous function. Then \tfae 
\begin{enumerate}
\item $\varphi$ is of positive type.
\item  for all $f\in C_c(G)$ and  all $x\in G^{(0)}$, one has
$$\int\int \varphi(\gamma^{-1}\gamma')\overline {f(\gamma)} f(\gamma')d\lambda^x(\gamma)d\lambda^x(\gamma')\ge 0.$$
\item there exists a continuous $G$-Hilbert bundle $E$ and a continuous section $e\in C(G^{(0)},E)$ such that
$$\varphi(\gamma)= (e\circ r(\gamma) | L(\gamma)e\circ s(\gamma)).$$
\end{enumerate}
\end{proposition}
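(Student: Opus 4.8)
The plan is to establish the cycle of implications $(iii)\Rightarrow(ii)\Rightarrow(i)\Rightarrow(iii)$, using Theorem \ref{kernel} fibrewise to produce the Hilbert spaces and then assembling them into a $G$-Hilbert bundle. For $(iii)\Rightarrow(ii)$, I would simply substitute the formula for $\varphi$ into the double integral: writing $\varphi(\gamma^{-1}\gamma') = (L(\gamma)e\circ s(\gamma)\mid L(\gamma')e\circ s(\gamma'))$ after using the identity $\varphi(\eta) = (e\circ r(\eta)\mid L(\eta)e\circ s(\eta))$ together with $L(\gamma^{-1}) = L(\gamma)^*$ and the cocycle identity $L(\gamma^{-1}\gamma') = L(\gamma)^*L(\gamma')$, the integral becomes $\big\| \int L(\gamma)e\circ s(\gamma)\,\overline{f(\gamma)}\,d\lambda^x(\gamma)\big\|^2_{E_x}\ge 0$ (here one needs that $s(\gamma)=x$ on $G^x$ and that all the vectors $L(\gamma)e\circ s(\gamma)$ lie in the common fibre $E_x = E_{r(\gamma)}$). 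For $(ii)\Rightarrow(i)$, fix $x\in G^{(0)}$ and apply Remark \ref{PT kernel}: the inequality in $(ii)$ is exactly the positivity of $\langle\varphi_x, (f\lambda^x)^*\otimes f\lambda^x\rangle$ for all $f\in C_c(G^x)$ (extending $f$ off $G^x$ by Tietze or a partition of unity argument), and since $\lambda^x$ has support $G^x$, the second bullet of Remark \ref{PT kernel} gives that $\varphi_x$ is a positive type kernel; as $x$ was arbitrary, $\varphi$ is PT.

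For the main implication $(i)\Rightarrow(iii)$, I would for each $x\in G^{(0)}$ apply Theorem \ref{kernel} to the kernel $\varphi_x$ on $G^x\times G^x$, obtaining a Hilbert space $E_x$ and a continuous map $e_x:G^x\to E_x$ with $\varphi(\gamma^{-1}\gamma')=(e_x(\gamma)\mid e_x(\gamma'))$ and $\overline{\mathrm{span}}\,e_x(G^x)=E_x$. Set $e(x):=e_x(x)$ (using $x\in G^x$ since $r(x)=x$). For $\gamma\in G$, the map $\gamma':G^{s(\gamma)}\to G^{r(\gamma)}$, $\eta\mapsto\gamma\eta$, intertwines the kernels $\varphi_{s(\gamma)}$ and $\varphi_{r(\gamma)}$ because $(\gamma\eta)^{-1}(\gamma\eta')=\eta^{-1}\eta'$; by the uniqueness part $(ii)$ of Theorem \ref{kernel} there is a unique unitary $L(\gamma):E_{s(\gamma)}\to E_{r(\gamma)}$ with $L(\gamma)e_{s(\gamma)}(\eta)=e_{r(\gamma)}(\gamma\eta)$. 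Functoriality of this construction gives the cocycle identity $L(\gamma\gamma')=L(\gamma)L(\gamma')$, and taking $\eta=s(\gamma)$ yields $L(\gamma)e\circ s(\gamma)=e_{r(\gamma)}(\gamma)$, hence $\varphi(\gamma)=\varphi(\gamma^{-1}\cdot r(\gamma))\cdot\,$—more precisely $\varphi(\gamma) = (e_{r(\gamma)}(r(\gamma))\mid e_{r(\gamma)}(\gamma)) = (e\circ r(\gamma)\mid L(\gamma)e\circ s(\gamma))$, which is the desired formula once continuity is in place.

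The hard part will be the continuity/topology: one must equip the disjoint union $E=\bigsqcup_x E_x$ with a Banach bundle topology making it a $G$-Hilbert bundle, making $e$ a continuous section, and making $(\gamma,v)\mapsto L(\gamma)v$ continuous. Here I would use the concrete model from the proof of Theorem \ref{kernel}: realize $E_x$ as the completion of $\{f\lambda^x : f\in C_c(G^x)\}$, or better, fix $g\in C_c(G)$ and consider the vectors $j_x(g|_{G^x}\lambda^x)$; these are expected to generate a continuous family. The standard device is Fell's criterion — a total set of sections whose pairwise inner products $x\mapsto (j_x(g_1\lambda^x)\mid j_x(g_2\lambda^x)) = \int\int \varphi(\gamma^{-1}\gamma')\overline{g_1(\gamma)}g_2(\gamma')\,d\lambda^x(\gamma)\,d\lambda^x(\gamma')$ are continuous in $x$ (which follows from continuity of $\varphi$ and properties of the Haar system) determines a unique Banach bundle topology in which these sections are continuous. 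Continuity of $e$ then follows from $\|e(x)-j_x(g\lambda^x)\|^2$ being expressible via $\varphi$ and $\lambda^x$ and made small by choosing $g$ an approximate identity near $x$, and continuity of $L$ follows from $L(\gamma)j_{s(\gamma)}(g\lambda^{s(\gamma)}) = j_{r(\gamma)}((\gamma\cdot g)\lambda^{r(\gamma)})$ together with the continuity of the $G$-action on $C_c(G)$. Verifying these continuity statements carefully, and checking that the bundle so obtained is genuinely a $G$-Hilbert bundle in the sense of \cite{ren:bdd}, is the technical heart of the argument.
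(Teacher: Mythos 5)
Your plan is correct and follows essentially the same route as the paper: fibrewise GNS via Theorem \ref{kernel}, the isometries $L(\gamma)$ obtained from translation invariance of the kernels $\varphi_x$, and the Fell--Doran criterion applied to the fundamental family of sections $x\mapsto \int e_x(\gamma)f(\gamma)\,d\lambda^x(\gamma)$, $f\in C_c(G)$, to topologize $E=\coprod_x E_x$ (your $(iii)\Rightarrow(ii)\Rightarrow(i)$ leg is a trivial rearrangement of the paper's $(i)\Leftrightarrow(ii)$, $(iii)\Rightarrow(i)$). The only point you leave lighter than the paper is the joint continuity of $(\gamma,v)\mapsto L(\gamma)v$, which the paper settles by checking the induced bundle map $s^*E\to r^*E$ on the fundamental sections $g\otimes j(f\lambda)$ of $s^*E$ and then invoking uniform approximation \cite[Proposition 14.1]{fd:representations}; your identity $L(\gamma)j_{s(\gamma)}(g\lambda^{s(\gamma)})=j_{r(\gamma)}((\gamma\cdot g)\lambda^{r(\gamma)})$ is exactly the left-invariance computation that makes that argument go through.
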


\begin{proof} Some of the proofs are essentially the same as in the case of a group. See for example \cite[Section 13.4]{dix:C*}. Condition $(ii)$ can be written 
$$\forall x\in G^{(0)},\quad \forall f\in C_c(G),\quad\langle\varphi_x,(f\lambda^x)^*\otimes f\lambda^x\rangle\, \ge 0.$$
Therefore $(i)\Leftrightarrow (ii)$ by Remark \ref{PT kernel}. Condition $(iii)$ implies that
$$\forall x\in G^{(0)},\quad \forall \gamma,\gamma'\in G^x,\quad \varphi_x(\gamma,\gamma')=(e_x(\gamma)|e_x(\gamma')),$$
where the map $e_x: G^x\ra E_x$ is given by $e_x(\gamma)=L(\gamma)e\circ s(\gamma)$. Therefore $(iii)\Rightarrow (i)$. The novel part is $(i)\Rightarrow (iii)$. For each $x\in G^{(0)}$, the function $\varphi_x: G^x\times G^x\ra\C$ is a kernel of positive type. The GNS construction of the previous section provides a Hilbert space $E_x$ and a continuous function $e_x: G^x\ra E_x$ such that, for $(\gamma, \gamma')\in G^x\times G^x$, $\varphi_x(\gamma,\gamma')=(e_x(\gamma)|e_x(\gamma'))$ and the set $\{e_x(\gamma), \gamma\in G^x\}$ is total in $E_x$. Given $\gamma\in G_x^y$, there is a unique isometry $L(\gamma): E_x\ra E_y$ such that $L(\gamma)e_x(\gamma')=e_y(\gamma\gamma')$. Indeed, it suffices to check the equality
$$\|\sum_{i=1}^n a_i e_y(\gamma\gamma_i)\|^2=\|\sum_{i=1}^n a_i e_x(\gamma_i)\|^2$$
where $\gamma_1,\ldots,\gamma_n\in G^x$ and $a_1,\ldots,a_n\in\C$.
Both terms are equal to
$$\sum_{i,j=1}^n\overline{a_i}a_j\varphi(\gamma_i^{-1}\gamma_j).$$
Note that we can write
$$\varphi(\gamma)=(e\circ r(\gamma)| L(\gamma)e\circ s(\gamma))$$
where $e$ is the section $G^{(0)}\ra E=\coprod E_x$ defined by $e(x)=e_x(x)$. It remains to give $E$ the structure of a Hilbert bundle and to show that the action map $G*E\ra E$ is continuous. Given $f\in C_c(G)$, we define the section $j(f\lambda)$ by
$$j(f\lambda)(x)=\int e_x(\gamma)f(\gamma)d\lambda^x(\gamma).$$
The set  $\Lambda=\{j(f\lambda), f\in C_c(G)\}$ is a linear subspace of $\Pi E_x$; it is a fundamental family (i.e. for each $x$, $\Lambda(x)$ is total in $E_x$); and the norm
$$\|j(f\lambda)(x)\|=[\int \varphi(\gamma^{-1}\gamma')\overline {f(\gamma)}f(\gamma')d\lambda^x(\gamma)d\lambda^x(\gamma')]^{1/2}$$
is a continuous function of $x$. Therefore, according to \cite[Theorem 13.18]{fd:representations}, there is a unique topology on $E$ making it into a Hilbert bundle and such that all the elements of $\Lambda$ are continuous sections. The above section $e: G^{(0)}\ra E$ is continuous since for all $f\in C_c(G)$,
$$(e(x)|j(f\lambda)(x))=\int \varphi(\gamma)f(\gamma)d\lambda^x(\gamma)$$
is a continuous function of $x$. It remains to show that the action map $G*E\ra E$ is continuous. This amounts to showing the continuity of the bundle map $a: s^*E\ra r^*E$ sending $(\gamma, \xi)\in G\times E_{s(\gamma)}$ to $(\gamma, L(\gamma)\xi)\in G\times E_{r(\gamma)}$. According to \cite[Proposition 13.16]{fd:representations}, this amounts to showing that for all $\xi$ in a fundamental family of sections, the section $a\circ \xi$ is continuous. The pullback bundle $s^*E$ has the fundamental family of sections $C_c(G)\otimes\Lambda$, where for $g\in C_c(G)$ and $\xi\in\Lambda$, $(g\otimes\xi)(\gamma)=g(\gamma)\xi\circ s(\gamma)$. Let $\xi=g\otimes j(f\lambda)\in C_c(G)\otimes\Lambda$, where $f,g\in C_c(G)$. Then
$$\begin{array}{ccl}
a\circ\xi(\gamma)&=& g(\gamma)L(\gamma)j(f\lambda)(s(\gamma))\\
&=& \int L(\gamma)L(\gamma')e\circ s(\gamma')g(\gamma)f(\gamma')d\lambda^{s(\gamma)}(\gamma')\\
&=&\int L(\gamma\gamma')e\circ s(\gamma')g(\gamma)f(\gamma')d\lambda^{s(\gamma)}(\gamma')\\
&=&\int L(\gamma')e\circ s(\gamma')g(\gamma)f(\gamma^{-1}\gamma')d\lambda^{r(\gamma)}(\gamma')\\
&=&\int e(\gamma')g(\gamma)f(\gamma^{-1}\gamma')d\lambda^{r(\gamma)}(\gamma')\\
\end{array}$$
It is of the form $\gamma\mapsto \int e(\gamma')F(\gamma,\gamma')d\lambda^{r(\gamma)}(\gamma')$ with $F\in C_c(G*_rG)$, where $G*_rG=\{(\gamma,\gamma')\in G\times G: r(\gamma)=r(\gamma')\}$. Such a section is continuous. Indeed, this is true when $F$ is of the form $F(\gamma,\gamma')=f(\gamma)g(\gamma')$, where $f,g\in C_c(G)$, because it is of the form $f\otimes j(g\lambda)$. This is still true for an arbitrary $F\in C_c(G*_rG)$ because it can be be approximated uniformly by linear combinations of such functions of that form (see \cite[Proposition 14.1]{fd:representations}).

\end{proof}

We can give the main result about continuous positive type functions, which generalizes above Theorem \ref{kernel}.

\begin{theorem}\label{PTfunction}
Let $G$ be a locally compact groupoid with Haar system and let
$\varphi$ be a continuous positive type function on $G$. Then,
\begin{enumerate}
\item there exists a continuous $G$-Hilbert bundle $E$ and a continuous map $e:G^{(0)}\ra E$ such that
\begin{enumerate}
\item for all $\gamma\in G$, $\varphi(\gamma)=(e\circ r(\gamma)|L(\gamma) e\circ s(\gamma))$;
\item for all $x\in G^{(0)}$, the linear span of $\{L(\gamma)e\circ s(\gamma), \gamma\in G^x\}$ is dense in $E_x$.
\end{enumerate}
\item the pair $(E,e)$ is unique, in the sense that if $(E',e')$ also satisfies the conditions of $(i)$, there exists a unique isomorphism of $G$-Hilbert bundles 
$u:E\ra E'$ such that $e'=u\circ e$.
\end{enumerate}
\end{theorem}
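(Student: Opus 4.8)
The plan is to reduce everything to the fibrewise statement of Theorem \ref{kernel} and then verify that the resulting family of fibre isometries is $G$-equivariant and continuous. For part $(i)$ there is nothing new: the pair $(E,e)$ produced in the proof of the implication $(i)\Rightarrow(iii)$ of Proposition \ref{PTchar} satisfies $(a)$ by construction, with $e_x(\gamma)=L(\gamma)e\circ s(\gamma)$, and $(b)$ holds because the set $\{e_x(\gamma):\gamma\in G^x\}$ was shown there to be total in $E_x$.

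For $(ii)$, let $(E',e')$, with action $L'$, be another solution. Fix $x\in G^{(0)}$. The positive type kernel $\varphi_x$ on $G^x\times G^x$ then has the two GNS solutions $\gamma\mapsto L(\gamma)e\circ s(\gamma)$ in $E_x$ and $\gamma\mapsto L'(\gamma)e'\circ s(\gamma)$ in $E'_x$, each with total range by $(b)$; Theorem \ref{kernel}$(ii)$ yields a unique isometric isomorphism $u_x:E_x\to E'_x$ with $u_x\bigl(L(\gamma)e\circ s(\gamma)\bigr)=L'(\gamma)e'\circ s(\gamma)$ for all $\gamma\in G^x$. Taking $\gamma=x$ gives $u_x(e(x))=e'(x)$, so $e'=u\circ e$ for $u=(u_x)_{x\in G^{(0)}}$. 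Equivariance I would check on total sets: for $\gamma\in G$ with $s(\gamma)=x$ and $r(\gamma)=y$, the bounded maps $u_yL(\gamma)$ and $L'(\gamma)u_x$ from $E_x$ to $E'_y$ agree on each vector $L(\gamma')e\circ s(\gamma')$ ($\gamma'\in G^x$), because $L(\gamma)\bigl(L(\gamma')e\circ s(\gamma')\bigr)=L(\gamma\gamma')e\circ s(\gamma\gamma')$, similarly for $L'$, and $\gamma\gamma'\in G^y$; hence $u_yL(\gamma)=L'(\gamma)u_x$, i.e.\ $u$ intertwines the actions.

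The main obstacle is the continuity of $u$. For this I would use the fundamental family $\Lambda=\{j(f\lambda):f\in C_c(G)\}$ of continuous sections of $E$, where $j(f\lambda)(x)=\int L(\gamma)e\circ s(\gamma)\,f(\gamma)\,d\lambda^x(\gamma)$; these sections are continuous because $e$ and the action $G*E\to E$ are, and they are fundamental because, by continuity of $\gamma\mapsto L(\gamma)e\circ s(\gamma)$ and the fact that $\lambda^x$ has full support $G^x$, every vector $L(\gamma)e\circ s(\gamma)$ lies in the closed linear span of $\{j(f\lambda)(x):f\in C_c(G)\}$, which together with $(b)$ gives totality in $E_x$. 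Since $u_x$ is bounded it commutes with the integral, so $u\circ j(f\lambda)=j'(f\lambda)$, the analogously defined section of $E'$, which is continuous. As $u$ is fibrewise isometric and carries the fundamental family $\Lambda$ into continuous sections of $E'$, the criterion \cite[Proposition 13.16]{fd:representations} already invoked in the proof of Proposition \ref{PTchar} shows $u$ is continuous; the same argument applied to $u^{-1}$ (which carries the corresponding fundamental family of $E'$ into continuous sections of $E$) shows $u$ is a homeomorphism. Thus $u$ is an isomorphism of $G$-Hilbert bundles with $e'=u\circ e$, and it is the only one: any such $u$ must satisfy $u_x(e(x))=e'(x)$ and intertwine $L$ with $L'$, hence is prescribed on the dense linear span of $\{L(\gamma)e\circ s(\gamma):\gamma\in G^x\}$ and therefore unique.
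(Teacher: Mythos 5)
Your argument is correct and follows essentially the same route as the paper: fibrewise application of Theorem \ref{kernel} to get the isometries $u_x$, then continuity of the assembled bundle map via the identity $u\circ j(f\lambda)=j'(f\lambda)$ for the fundamental family of sections $j(f\lambda)$, $f\in C_c(G)$, using the fact that integrating a continuous compactly supported section of a pull-back bundle against the Haar system gives a continuous section. Your explicit verifications of $G$-equivariance and of the continuity of $u^{-1}$ are points the paper leaves implicit, but they do not change the method.
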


\begin{proof}
The existence of $E$ and $e$ is given above. Note that for $\gamma\in G^x$, $L(\gamma)e\circ s(\gamma)=e_x(\gamma)$. By construction, for all $x\in G^{(0)}$,
$\{e_x(\gamma), \gamma\in G^x\}$ is total in $E_x$.
\sk
Let us show the uniqueness of $(E,e)$ as stated in $(ii)$. Let $(E',e')$ be a pair  satisfying $(a)$ and $(b)$. For each $x\in G^{(0)}$, according to Theorem \ref{kernel} there exists a unique isometry $u_x:E_x\ra E'_x$ such that $e'_x=u_x\circ e_x$, where $e'_x: G^x\ra E'_x$ is defined by $e'_x(\gamma)=L'(\gamma)e'\circ s(\gamma)$. This gives a bundle map $u:E\ra E'$ which is $G$-equivariant. In order to show that it is continuous, it suffices to check that for all sections $\xi$ in a fundamental family of continuous sections of $E$, $u\circ\xi$ is a continuous section of $E'$. Let us consider $\xi=j(f\lambda)$, where $f\in C_c(G)$. Then $u\circ\xi=j'(f\lambda)$, where
$$j'(f\lambda)(x)=\int L'(\gamma)e'\circ s(\gamma)f(\gamma)d\lambda^x(\gamma)\in E'_x.$$
Let us show that this section of $E'$  is  continuous. The map
$$\gamma\mapsto L'(\gamma)e'\circ s(\gamma)f(\gamma)$$
is a continuous section of the pullback bundle $r^*E'$ and it has compact support.
The conclusion is given by the following lemma.
\end{proof}

\begin{lemma} Let $\xi$ be a continuous section with compact support of a pull-back bundle $\pi^*E$, where $p:X\ra Y$ is continuous and open and where $E$ is a Banach bundle over $Y$. Let $\alpha$ be a continuous $p$-system. Then $\alpha(\xi)(y)=\int \xi(x)d\alpha_y(x)$ is a continuous section of $E$.
\end{lemma}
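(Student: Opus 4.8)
The plan is to reduce to elementary sections and then pass to a local uniform limit. Write $p^{*}E$ for the pull-back bundle (the bundle denoted $\pi^{*}E$ in the statement). First I would recall that if $\Gamma$ is a fundamental family of continuous sections of $E$, then the sections $g\otimes\eta$ with $g\in C_c(X)$ and $\eta\in\Gamma$, defined by $(g\otimes\eta)(x)=g(x)\,\eta(p(x))$, form a fundamental family of continuous sections of $p^{*}E$ (this is the construction used in the proof of Proposition \ref{PTchar}); and by \cite[Proposition 14.1]{fd:representations}, applied exactly as there, the given section $\xi$ is a uniform limit of a sequence $(\xi_n)$ of finite linear combinations of such $g\otimes\eta$, with all supports contained in one fixed compact set $K\subseteq X$. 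I would also note that $\alpha(\xi)(y)=\int\xi(x)\,d\alpha_y(x)$ is well defined: on $p^{-1}(y)$ the section $\xi$ restricts to a continuous, compactly supported, $E_y$-valued map, which can be integrated against the Radon measure $\alpha_y$.

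Next I would check the assertion for an elementary section $\xi=g\otimes\eta$. Since $\alpha_y$ is carried by $p^{-1}(y)$,
$$\alpha(\xi)(y)=\int g(x)\,\eta(p(x))\,d\alpha_y(x)=\Big(\int g\,d\alpha_y\Big)\,\eta(y);$$
the scalar function $y\mapsto\int g\,d\alpha_y$ is continuous by the definition of a continuous $p$-system, and $\eta$ is a continuous section of $E$, so $\alpha(\xi)$ is a continuous section of $E$, and by linearity so is each $\alpha(\xi_n)$.

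Then I would control the passage to the limit. Choosing $h\in C_c(X)$ with $h\ge 0$ and $h\ge 1$ on $K$ gives $\alpha_y(K)\le\int h\,d\alpha_y$, which is a continuous and hence locally bounded function of $y$: every $y_0\in Y$ has a neighbourhood $V$ with $M:=\sup_{y\in V}\alpha_y(K)<\infty$. Combined with the elementary estimate $\|\int\zeta\,d\alpha_y\|\le\int\|\zeta(x)\|\,d\alpha_y(x)$ for a compactly supported continuous section $\zeta$ of $p^{*}E$, together with the fact that $\xi_n-\xi$ is supported in $K$, this yields
$$\sup_{y\in V}\big\|\alpha(\xi_n)(y)-\alpha(\xi)(y)\big\|\ \le\ M\,\|\xi_n-\xi\|_\infty\ \longrightarrow\ 0,$$
so on $V$ the continuous sections $\alpha(\xi_n)$ converge uniformly to $\alpha(\xi)$. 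Finally I would invoke the standard fact that a section of a Banach bundle which, near every point, is a uniform limit of continuous sections is itself continuous: a basic neighbourhood of $\alpha(\xi)(y_0)$ has the form $W=\{b:\ p(b)\in U,\ \|b-\alpha(\xi_n)(p(b))\|<\varepsilon\}$ for $n$ large and $U$ a neighbourhood of $y_0$, and then $\alpha(\xi)(y)\in W$ for all $y\in U\cap V$; as $y_0$ is arbitrary, $\alpha(\xi)$ is continuous.

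I expect the third step to be the main obstacle: producing the local bound on $y\mapsto\alpha_y(K)$ and rigorously justifying the vector-valued inequality $\|\int\zeta\,d\alpha_y\|\le\int\|\zeta\|\,d\alpha_y$ (which comes from the weak definition of the integral together with Hahn--Banach, or from approximating $\zeta|_{p^{-1}(y)}$ by simple $E_y$-valued sections). The reduction to elementary sections and the concluding limiting argument are routine once the approximation result of \cite{fd:representations} is in hand.
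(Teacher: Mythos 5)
Your proof is correct and follows essentially the same route as the paper: verify the statement for elementary sections $g\otimes\eta$ (where continuity is immediate from the definition of a continuous $p$-system) and then pass to a general compactly supported section by uniform approximation via \cite[Proposition 14.1]{fd:representations}. The only difference is that you spell out the limiting step (the local bound on $\alpha_y(K)$ and the local uniform convergence of $\alpha(\xi_n)$ to $\alpha(\xi)$), which the paper leaves implicit.
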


\begin{proof}
This is true for a section of the form $\xi=f\otimes\eta$, where $f\in C_c(X)$ and $\eta\in C(Y,E)$. This remains true for an arbitrary continuous section with compact support because it can be uniformly approximated by linear combinations of such sections.
\end{proof}

\subsection{Conditionally negative type kernels}
Again, the material of this section is well known and covered in \cite{bhv:T}. Since the general groupoid case relies on this particular case, we recall it here.
\begin{definition}
Let $X$ be a set. A function $\psi: X\times X\ra \R$ is said to be conditionally of negative type (CNT for short) if 
\begin{enumerate} 
\item $\forall x\in X, \psi(x,x)=0$, 
\item $\forall (x,y)\in X\times X, \psi(y,x)=\psi(x,y)$ and
\item $\forall n\in\N^*$, $\forall \zeta_1,\ldots,\zeta_n\in\R$ such that $\sum_1^n\zeta_i=0$ and $\forall x_1,\ldots,x_n\in X,$
$$\sum_{i,j}\psi(x_i,x_j)\zeta_i\zeta_j\le 0.
$$
\end{enumerate}
\end{definition} 
Just as the scalar product of a complex Hilbert space is the model of a positive type kernel, the square of the distance function of a real affine Hilbert space is the model of a conditionally negative type kernel. Let us fix our notation. We write an affine space as a pair $(E,\vec E)$, where the vector space $\vec E$ acts on the space $E$ on the right freely and transitively. When we speak of an affine space $E$, the direction space $\vec E$ is implicit. In this section, we consider real vector spaces. Given $x,y\in E$, we write $\overrightarrow{xy}=y-x\in\vec E$. We say that an affine space $(E,\vec E)$ is a real affine Hilbert space if $\vec E$ is a real Hilbert space. Given a locally compact space $X$, we write $C(X,\R)$, $M(X,\R)$, etc. for the space of real continuous functions, real measures, etc.

\begin{proposition}\label{CNTkernel} Let $X$ be a locally compact Hausdorff space. Let $\psi: X\times X\ra \R$ be a continuous function such that $\psi(x,x)=0$ for all $x\in X$ and $\psi(y,x)=\psi(x,y)$ for all $(x,y)\in X\times X$. Then \tfae 
\item{(i)} $\psi$ is conditionally of negative type;
\item{(ii)} if $\mu$ is a fixed positive Radon measure with full support,  for every $f\in C_c(G,\R)$ such that $\int f d\mu=0$, one has
$\int\int \varphi(x,y)f(x)f(y)d\lambda(x)d\lambda(y)\le 0$;
\item{(iii)}  for $\alpha\in M_c(X,\R)$ such that $\alpha(X)=0$, one has
$\int\int \varphi(x,y)d\alpha(x)d\alpha(y)\le 0$.
\end{proposition}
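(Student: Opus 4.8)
The plan is to mimic the proof of Theorem~\ref{kernel} together with Remark~\ref{PT kernel}, replacing the cone of positive-type forms by the cone of conditionally-negative-type forms. The unifying observation is that all three conditions are ``test-measure'' inequalities for the real bilinear form
$$\Psi(\alpha,\beta)=\langle\psi,\alpha\otimes\beta\rangle=\int\!\!\int\psi(x,y)\,d\alpha(x)\,d\beta(y),\qquad \alpha,\beta\in M_c(X,\R),$$
evaluated on the \emph{balanced} measures, i.e.\ those $\alpha$ with $\alpha(X)=0$: indeed $(i)$ is exactly the assertion $\Psi(\alpha,\alpha)\le 0$ for every finitely supported balanced $\alpha$ (the remaining two clauses of the definition of CNT being among our standing hypotheses on $\psi$), $(iii)$ is the same assertion for every compactly supported balanced $\alpha$, and $(ii)$ is this assertion for the balanced measures of the special form $f\mu$ with $f\in C_c(X,\R)$ and $\int f\,d\mu=0$. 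Hence $(iii)\Rightarrow(i)$ and $(iii)\Rightarrow(ii)$ hold by mere restriction, and it remains to prove $(i)\Rightarrow(iii)$ and $(ii)\Rightarrow(iii)$.

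Both of these will follow from one density-plus-continuity argument. For the continuity part, I claim that $\alpha\mapsto\Psi(\alpha,\alpha)$ is weakly continuous on the set of measures supported in a fixed compact set $K\subset X$. If $\alpha_n\to\alpha$ weakly with $\mathrm{supp}(\alpha_n)\subset K$, then $\sup_n|\alpha_n|(K)<\infty$ (a weakly convergent sequence in $C(K)^*$ is norm bounded); choosing $\rho\in C_c(X)$ with $\rho\equiv 1$ on $K$ we may replace $\psi$ by $\tilde\psi=(\rho\otimes\rho)\psi\in C_c(X\times X)$ without changing $\Psi(\alpha_n,\alpha_n)$ or $\Psi(\alpha,\alpha)$, approximate $\tilde\psi$ uniformly by sums $\sum_i f_i\otimes g_i$ with $f_i,g_i\in C_c(X)$, and use $\langle f\otimes g,\alpha_n\otimes\alpha_n\rangle=\langle f,\alpha_n\rangle\langle g,\alpha_n\rangle$ together with the uniform bound to conclude $\Psi(\alpha_n,\alpha_n)\to\Psi(\alpha,\alpha)$. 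For the density part, given a nonzero compactly supported balanced $\alpha$ I manufacture approximants of the required special form whose supports stay inside one fixed compact set (after multiplication by a cutoff equal to $1$ on $\mathrm{supp}(\alpha)$): for $(i)\Rightarrow(iii)$ I take finitely supported $\beta_n\to\alpha$ weakly (\cite[ch.~III, \S~2, th.~1]{bbki:integration}) and correct the mass by $\gamma_n=\beta_n-\beta_n(X)\,\epsilon_{x_0}$ with $x_0\in\mathrm{supp}(\alpha)$ fixed, so that $\gamma_n$ is finitely supported, balanced, and still converges weakly to $\alpha$ since $\beta_n(X)\to\alpha(X)=0$; for $(ii)\Rightarrow(iii)$ I take $g_n\in C_c(X,\R)$ with $g_n\mu\to\alpha$ weakly (the measures $g\mu$, $g\in C_c(X,\R)$, are weakly dense in $M_c(X,\R)$ because $\mu$ has full support, as already used in the proof of Theorem~\ref{kernel}), fix $\chi\in C_c(X,\R)$ with $\int\chi\,d\mu=1$, and set $f_n=g_n-\big(\int g_n\,d\mu\big)\chi$, so that $\int f_n\,d\mu=0$ and $f_n\mu\to\alpha$ weakly. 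In either case the hypothesis gives $\Psi(\cdot,\cdot)\le 0$ on the approximant, and the continuity statement forces $\Psi(\alpha,\alpha)\le 0$.

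I expect the only real obstacle to be the bookkeeping that keeps all approximating measures supported inside a single compact set --- which is what lets weak convergence of measures upgrade to convergence of the double integral against the continuous kernel $\psi$ --- and the attendant verification of the weak continuity of the quadratic form $\alpha\mapsto\Psi(\alpha,\alpha)$ on such sets, where one needs both the norm-boundedness of weakly convergent sequences of measures and a Stone--Weierstrass-type approximation of $\psi$ on $K\times K$. The handling of the mass-zero constraint is routine and entirely parallel to the positive-type case already treated in Theorem~\ref{kernel} and Remark~\ref{PT kernel}.
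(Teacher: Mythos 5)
Your proposal is correct and follows essentially the same route as the paper: the paper's (very terse) proof is precisely the observation that the balanced finitely supported measures, respectively the balanced measures $f\mu$, are weakly dense in $M^0_c(X,\R)$, combined with the same density/continuity argument already used for Theorem~\ref{kernel}. Your mass-correction trick ($\gamma_n=\beta_n-\beta_n(X)\epsilon_{x_0}$, resp.\ $f_n=g_n-(\int g_n\,d\mu)\chi$) and the verification of continuity of $\alpha\mapsto\langle\psi,\alpha\otimes\alpha\rangle$ on uniformly bounded families supported in a fixed compact set are exactly the details the paper leaves implicit.
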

\begin{proof} It is essentially the same density argument as the one we used in 2.1. Let $M_c^0(X,\R)=\{\alpha\in M_c(X,\R): \alpha(X)=0\}$. Let $E$ be either the subspace of finitely supported measures or the subspace of measures of the form $f\mu$ where $f\in C_c(X,\R)$. Then $E\cap M^0_c(X,\R)$ is dense in $M^0_c(X,\R)$ in the weak topology. This gives the equivalences $(i)\Leftrightarrow (iii)$ and $(ii)\Leftrightarrow (iii)$.
\end{proof}
The next theorem is \cite[Theorem C.2.3]{bhv:T}. In fact, as shown there, it holds for an arbitrary topological space $X$. We include a proof for completeness.
\begin{theorem} Let $X$ be a locally compact Hausdorff space. Let $\psi: X\times X\ra \R$ be a continuous function conditionally of negative type. Then 
\begin{enumerate}
\item There exists a pair $(E,e)$ consisting of a real affine Hilbert space $E$ and a continuous function $e:X\ra  E$ such that
\begin{enumerate}
\item for all $(x,y)\in X\times X$, $\psi(x,y)=\|e(x)-e(y)\|^2$;
\item $E$ is the closed affine subspace generated by $e(X)$.
\end{enumerate}
\item If $(E',e')$ is another pair as in $(i)$, there exists a unique affine isometry $u:E\ra E'$ such that $e'=u\circ e$.
\end{enumerate}
\end{theorem}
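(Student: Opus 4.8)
The plan is to reduce the conditionally-negative-type kernel $\psi$ to a positive-type kernel via a standard "base-point" trick, apply Theorem~\ref{kernel} to get a Hilbert space $\vec E$ with a map into it, and then reconstruct the affine space $E$ from $\vec E$ together with the cocycle identity encoded by $e$. Concretely, fix a base point $x_0 \in X$ and set
$$\varphi(x,y) = \tfrac12\big(\psi(x,x_0) + \psi(y,x_0) - \psi(x,y)\big).$$
First I would check that $\varphi$ is a continuous positive-type kernel on $X\times X$: continuity is immediate, and positivity follows from property (iii) of CNT applied to the measure $\alpha = \sum_i \zeta_i \epsilon_{x_i} - (\sum_i\zeta_i)\epsilon_{x_0}$, which has total mass $0$, together with the normalization $\psi(x_0,x_0)=0$ and symmetry — this is exactly the computation in \cite[Theorem~C.2.3]{bhv:T}. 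Then Theorem~\ref{kernel} supplies a Hilbert space, which I call $\vec E$, and a continuous map $b:X\ra\vec E$ with $\varphi(x,y) = (b(x)\mid b(y))$ and $b(X)$ total.

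Next I would verify the key identity $\|b(x)-b(y)\|^2 = \psi(x,y)$ by expanding the inner product: $\|b(x)-b(y)\|^2 = \varphi(x,x) - 2\varphi(x,y) + \varphi(y,y) = \psi(x,x_0) - (\psi(x,x_0)+\psi(y,x_0)-\psi(x,y)) + \psi(y,x_0) = \psi(x,y)$, using $\varphi(x,x)=\psi(x,x_0)$. Now I build the affine space: let $E$ be $\vec E$ regarded as an affine space over itself (i.e. forget the origin), but take for the distinguished part only the closed affine subspace generated by $b(X)$, and define $e = b: X \ra E$. Then condition (a) is the identity just proved, and condition (b) holds by construction, since the closed affine span of $e(X)$ is exactly what we took $E$ to be. One should note the closed affine span of $b(X)$ equals $b(x_0) + \overline{\operatorname{span}}\{b(x)-b(x_0): x\in X\}$, and since $b(X)$ is total in $\vec E$, this direction space is all of $\vec E$ (here using that $\vec E$ is spanned by differences once it is spanned by the points themselves, as $b(x_0)\in\overline{\operatorname{span}}\,b(X)$).

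For uniqueness in (ii), suppose $(E',e')$ is another such pair. Fix $x_0$ again and let $\vec E', b'$ be obtained from $E'$ by choosing $e'(x_0)$ as origin, i.e. $b'(x) = \overrightarrow{e'(x_0)e'(x)} \in \vec{E}'$. The identity $\psi(x,y) = \|e'(x)-e'(y)\|^2$ together with the polarization above forces $(b'(x)\mid b'(y)) = \varphi(x,y)$, so $(\vec E', b')$ is a solution to the positive-type problem for $\varphi$; moreover $b'(X)$ generates $\vec E'$ because $e'(X)$ affinely generates $E'$. By the uniqueness clause of Theorem~\ref{kernel} there is a unique Hilbert space isomorphism $\vec u:\vec E \ra \vec E'$ with $b' = \vec u\circ b$. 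The affine map $u:E\ra E'$ defined by $u(e(x_0) + v) = e'(x_0) + \vec u(v)$ is then an affine isometry with $e' = u\circ e$, and it is unique because $e(X)$ affinely generates $E$ and any affine isometry is determined by its values on an affinely generating set.

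The only mildly delicate point — the step I expect to need the most care — is checking that the passage between "total set of points in a Hilbert space" and "affinely generating set in the associated affine space" behaves correctly, i.e. that the direction space of the closed affine hull of $b(X)$ is all of $\vec E$; this is where one uses that $b(x_0)$ itself lies in the closed linear span of $b(X)$, so that the linear and affine closures have the same direction space. Everything else is the base-point polarization identity and the two functoriality statements already available from Theorem~\ref{kernel}.
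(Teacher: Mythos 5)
Your argument is correct in substance, but it follows a genuinely different route from the paper. You reduce to the positive-type case by fixing a base point $x_0$ and forming $\varphi(x,y)=\tfrac12(\psi(x,x_0)+\psi(y,x_0)-\psi(x,y))$, then invoke Theorem \ref{kernel} and translate back to an affine space; the paper instead constructs $(E,\vec E)$ intrinsically, by separating and completing the affine space $M^1_c(X,\R)$ of compactly supported real measures of total mass $1$ over the vector space $M^0_c(X,\R)$ of mass-zero measures, with pre-inner product $-\langle\psi/2,\cdot\otimes\cdot\rangle$, and sets $e(x)=j(\epsilon_x)$. Your reduction is shorter and leans on work already done, but it breaks the symmetry by a choice of base point; the paper's base-point-free construction is what gets recycled verbatim in the groupoid setting (the sections $j(f\lambda)$ and the density statement of Proposition \ref{density} are exactly the tools used in Proposition \ref{CNTchar}), which is why the author sets it up this way.

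Two points in your write-up should be tightened, though neither is fatal. First, Theorem \ref{kernel} as stated is about complex Hilbert spaces and complex-valued kernels, while here you need a \emph{real} Hilbert space $\vec E$ with $b(X)$ total over $\R$, and likewise the real analogue of the uniqueness clause; since $\varphi$ is real and symmetric, real positive-definiteness passes to complex coefficients and the GNS construction (and its uniqueness argument) goes through over $\R$ word for word, but you should either say this or take the closed real-linear span of $b(X)$ with the real part of the inner product. Second, your justification that the closed affine hull of $b(X)$ has direction space all of $\vec E$ ("differences span once the points span, as $b(x_0)$ lies in the closed span of $b(X)$") is not a valid general principle: the points $e_1$ and $e_1+e_2$ span $\R^2$ linearly, yet their differences span only $\R e_2$. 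The correct, and immediate, fix is to observe that $\varphi(x_0,y)=0$ for all $y$, so $\|b(x_0)\|^2=\varphi(x_0,x_0)=0$ and $b(x_0)=0$; hence the affine hull of $b(X)$ contains the origin, coincides with the linear hull, and is dense in $\vec E$, which also simplifies the final verification that $u\circ e=e'$ in your uniqueness argument.
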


\begin{proof} To construct $(E,c)$, we proceed exactly as in the case of a  function of positive type.
We introduce the affine subspace of $M_c(X,\R)$:
$$M^1_c(X,\R)=\{\alpha\,\hbox{compactly supported signed Radon measures on}\, X : \alpha(X)=1\}.$$
Its associated vector space is
$M^0_c(X,\R)$. We define a pre-inner product on $M^0_c(X,\R)$ and a quasi-metric (i.e. satisfying the axioms of a metric except the separation property) by
$$(\alpha,\beta)\mapsto -<\psi/2,\alpha\otimes\beta>\quad{\rm for}\quad\alpha,\beta\in M^0_c(X,\R),$$
$$(\alpha,\beta)\mapsto (-<\psi/2,\alpha-\beta\otimes\alpha-\beta>)^{1/2}\quad{\rm for}\quad\alpha,\beta\in M^1_c(X,\R).$$
We let $\vec E$ be the Hilbert space obtained by separating and completing $M^0_c(X,\R)$ with respect to the associated semi-norm. The separation/completion of $M^1_c(X,\R)$ is an affine real Hilbert space $E$ having $\vec E$ as its associated Hilbert space. We denote by $j:M^1_c(X,\R)\ra E$ the canonical map. It is an affine map having the canonical map $\vec j: M^0_c(X,\R)\ra \vec E$ as its associated linear map. For $\alpha, \beta\in M^0_c(X,\R)$, we have
$$(\vec j(\alpha)| \vec j(\beta))=-<\psi/2,\alpha\otimes\beta>.$$
For $\alpha, \beta\in M^1_c(X,\R)$, $\|j(\alpha)-j(\beta)\|^2$ is given by:
$$-(1/2)\int \psi(x,y)d\alpha(x)d\alpha(y)+\int\psi(x,y)d\alpha(x)d\beta(y) -(1/2)\int\psi(x,y)d\beta(x)d\beta(y).$$
We define $e:X\ra E$ by $e(x)=j(\epsilon_x)$. Then
$$\|e(x)-e(y)\|^2=\psi(x,y).$$
The continuity of $e$ is obvious. For the assertion $(b)$, we let $E\subset M_c(X,\R)$ be the subspace of finitely supported measures. As we have seen, $E^1=E\cap M_c^1(X,\R)$ is dense in $M_c^1(X,\R)$ in the weak topology. One deduces from the above formula that $\overline{j(E^1)}=E$. We are done since $j(E^1)$ is exactly the affine subspace generated by $e(X)$.

Let us show uniqueness. The map $e$ [resp. $e'$] extends unique to an affine map $j:E^1\ra E$ [resp. $j':E^1\ra E'$] such that 
$j(\sum_{i=1}^n\lambda_i\epsilon_{x_i})=\sum_{i=1}^n\lambda_i e(x_i)$ 
for all $n\in\N^*$, all $x_1,\ldots,x_n\in X$ and all $\lambda_1,\ldots,\lambda_n\in\R$ such that $\sum_{I=1}^n\lambda_1=1$. The above formula shows that for all $\alpha,\beta\in E^1$, $\|j(\alpha)-j(\beta)\|=\|j'(\alpha)-j'(\beta)\|$. Therefore, there is an isometry $u:j(E^1)\ra j'(E^1)$ sending $j(\alpha)$ to $j'(\alpha)$. This isometry extends by uniform continuity to an isometry from $E$ onto $E'$. As a continuous affine map, such an isometry is completely determined by its values on $e(X)$; therefore it is unique.
\end{proof}

We shall need the following result, which is an immediate consequence of the above. Given a continuous function $e:X\ra  E$, where $X$ is a locally compact Hausdorff space and $E$ is a real affine Hilbert space, we define $j(\alpha)=\int e(x)d\alpha(x)$ for every $\alpha\in M_c^1(X,\R)$. Then we have

\begin{proposition}\label{density} Let $\mu$ be a positive Radon measure on $X$ with full support. Then $\{j(f\lambda): f\in C_c(X,\R), \int fd\mu=1\}$ and the affine subspace generated by $e(X)$ have the same closure in $E$.
\end{proposition}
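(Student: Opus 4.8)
The plan is to make precise the sense in which this is ``an immediate consequence of the above'': rerun the density argument from the proof of the preceding theorem twice, once with finitely supported measures and once with measures of the form $f\mu$, and observe that both times the closure of the image under $j$ is the same. Write $\psi(x,y)=\|e(x)-e(y)\|^2$, a continuous conditionally negative type kernel on $X$, let $D_{\mathrm{fin}}$ be the set of finitely supported $\alpha\in M^1_c(X,\R)$, and let $D_\mu=\{f\mu:f\in C_c(X,\R),\ \int f\,d\mu=1\}\subseteq M^1_c(X,\R)$, so that $j(D_{\mathrm{fin}})$ is exactly the affine subspace generated by $e(X)$ and $j(D_\mu)$ is the set appearing in the statement. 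First I would record the identity already used in the proof of the preceding theorem,
\[
\|j(\alpha)-j(\beta)\|^2=-\tfrac12\,\langle\psi,(\alpha-\beta)\otimes(\alpha-\beta)\rangle,\qquad \alpha,\beta\in M^1_c(X,\R),
\]
obtained by expanding $\|e(x)-e(y)\|^2$ and using $(\alpha-\beta)(X)=0$. Since $D_{\mathrm{fin}}\cup D_\mu\subseteq M^1_c(X,\R)$ and $j$ is affine, it suffices to prove that $j(\beta)\in\overline{j(D_{\mathrm{fin}})}\cap\overline{j(D_\mu)}$ for every $\beta\in M^1_c(X,\R)$; then $\overline{j(D_{\mathrm{fin}})}=\overline{j(M^1_c(X,\R))}=\overline{j(D_\mu)}$, which is the assertion.

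So fix $\beta\in M^1_c(X,\R)$ and a compact neighbourhood $K$ of its support, and let $\mathcal M_K=\{\sigma\in M^1_c(X,\R):\operatorname{supp}\sigma\subseteq K\}$, equipped with the weak topology. By the density statements used in the proofs of Proposition \ref{CNTkernel} and of the preceding theorem (finitely supported measures of mass one, respectively measures $f\mu$ of mass one, are weakly dense in $M^1_c(X,\R)$, and the approximants may be taken supported in $K$), both $D_{\mathrm{fin}}\cap\mathcal M_K$ and $D_\mu\cap\mathcal M_K$ are weakly dense in $\mathcal M_K$. The key point is then that the restriction of $j$ to $\mathcal M_K$ is continuous from the weak topology to the norm topology of $E$: granting this, $j(\beta)$, which lies in $j(\mathcal M_K)$, is a limit of points of $j(D_{\mathrm{fin}}\cap\mathcal M_K)\subseteq j(D_{\mathrm{fin}})$ and of points of $j(D_\mu\cap\mathcal M_K)\subseteq j(D_\mu)$, as required.

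It remains to prove that weak-to-norm continuity, and this is the one step deserving real care; it is exactly what is passed over in the proof of the preceding theorem at ``one deduces from the above formula that $\overline{j(E^1)}=E$''. The obstacle is that the weak topology on $M_c(X,\R)$ pairs measures only with \emph{all} of $C(X)$, and $\psi$ need not be bounded on $X\times X$, so one cannot directly feed a weakly convergent net into the displayed identity; one must localise to $K$. Concretely, let $\sigma_i\to\sigma$ be a convergent net in $\mathcal M_K$. Being pointwise convergent on the Banach space $C(K)$, the net $(\sigma_i)$ is pointwise bounded there, so by the uniform boundedness principle $\sup_i\|\sigma_i\|<\infty$; put $M=\sup_i\|\sigma_i-\sigma\|<\infty$. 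Given $\varepsilon>0$, use Stone--Weierstrass to approximate $\psi$ uniformly on $K\times K$ within $\varepsilon$ by a finite sum $\Psi=\sum_{k=1}^N g_k\otimes h_k$ with $g_k,h_k\in C(K)$. Writing $\tau_i=\sigma_i-\sigma\to 0$ weakly, one has $|\langle\psi,\tau_i\otimes\tau_i\rangle-\langle\Psi,\tau_i\otimes\tau_i\rangle|\le\varepsilon M^2$, while $\langle\Psi,\tau_i\otimes\tau_i\rangle=\sum_k\langle g_k,\tau_i\rangle\langle h_k,\tau_i\rangle\to 0$ because each $\langle g_k,\tau_i\rangle\to 0$ and $|\langle h_k,\tau_i\rangle|\le\|h_k\|_\infty M$. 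Hence $\limsup_i|\langle\psi,\tau_i\otimes\tau_i\rangle|\le\varepsilon M^2$, and letting $\varepsilon\to 0$ gives $\langle\psi,(\sigma_i-\sigma)\otimes(\sigma_i-\sigma)\rangle\to 0$; by the displayed identity, $j(\sigma_i)\to j(\sigma)$ in $E$. This is the whole difficulty; the remainder is bookkeeping with the definition of $j$ and with affine combinations.
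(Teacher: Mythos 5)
Your strategy is the one the paper has in mind, and most of your write-up (the identity $\|j(\alpha)-j(\beta)\|^2=-\tfrac12\langle\psi,(\alpha-\beta)\otimes(\alpha-\beta)\rangle$, the identification of $j(D_{\mathrm{fin}})$ with the affine span of $e(X)$, the localisation to a compact $K$) is correct and makes explicit what the paper leaves implicit. But there is a genuine gap at exactly the step you call the key point. First, the uniform boundedness principle does not apply to a pointwise convergent \emph{net} of functionals: a convergent net of scalars is only bounded on a tail, and the tail depends on the test function, so pointwise convergence of $(\sigma_i)$ on $C(K)$ does not yield $\sup_i\|\sigma_i\|<\infty$. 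Second, the claim you are trying to prove is in fact false in general: the restriction of $j$ to $\mathcal M_K$ is \emph{not} weak-to-norm continuous unless the affine span of $e(X)$ is finite dimensional. Indeed, a basic weak neighbourhood of $0$ in the direction space $M^0_c(K,\R)$ contains the finite-codimensional subspace $\ker f_1\cap\dots\cap\ker f_m$, and a linear map carrying a subspace into a norm ball must vanish on it; hence any weak-to-norm continuous linear map has finite rank, whereas the linear part of $j$ has range containing all the vectors $e(x)-e(y)$. So the scheme ``weak density of $D\cap\mathcal M_K$ plus weak-to-norm continuity of $j$'' cannot work as stated; only the restriction of $j$ to norm-bounded subsets of $\mathcal M_K$ is weakly continuous, which is what your Stone--Weierstrass estimate actually proves.

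The repair is to feed that estimate only norm-controlled approximants, and the two density statements do provide them if you construct them explicitly: for $\beta\in M^1_c(X,\R)$ supported in $K$, take Riemann-sum approximants $\sum_k\beta(A_k)\,\epsilon_{x_k}$ for a fine partition $(A_k)$ of $\operatorname{supp}\beta$ with $x_k\in A_k$, whose total variation is at most $\|\beta\|$; and approximate each $\epsilon_x$ (hence each finitely supported mass-one measure, by affinity) by $g\mu$ with $g\ge 0$, $\int g\,d\mu=1$ and $\operatorname{supp}g$ in a small neighbourhood of $x$, again with controlled total variation. With $M=\sup_i\|\sigma_i-\sigma\|$ bounded by construction rather than by UBP, your inequality $|\langle\psi,\tau_i\otimes\tau_i\rangle-\langle\Psi,\tau_i\otimes\tau_i\rangle|\le\varepsilon M^2$ is legitimate and the argument closes; indeed with these explicit approximants and the uniform continuity of $\psi$ on $K\times K$ you can check directly that $e(x)$ lies in the closure of $j(D_\mu)$ and that $j(f\mu)$ lies in the closure of the affine span of $e(X)$, and since both sets are affine this gives the two inclusions between the closures without any continuity language. (Alternatively, in the second countable case --- the only one used later in the paper --- one may work with sequences, for which your UBP argument is valid.)
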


\subsection{Conditionally negative type functions}

Let us turn now to conditionally negative type functions on locally compact groupoids.

\begin{definition} Let $G$ be a groupoid. A real valued function $\psi$ defined on $G$ is said to be conditionally of negative type (CNT for short) if for all $x\in G^{(0)}$, the function $\psi_x$ defined on $G^x\times G^x$ by $\psi_x(\gamma,\gamma')=\psi(\gamma^{-1}\gamma')$ is a conditionally negative type kernel.
\end{definition}

Note that this implies that $\psi$ vanishes on $G^{(0)}$ and $\psi$ is symmetric.

In the following proposition, we use the notion of affine real Hilbert bundle. Given a Hilbert bundle $\vec E$ over a topological space $X$, an affine Hilbert bundle is a topological bundle $E$ over $X$ on which $\vec E$ acts on the right freely, transitively and properly. In other words we have an action map

$$\begin{array}{cc}
\Phi:E*\vec E& \ra  E\\
(u,\vec a)&\mapsto u+\vec a\\
\end{array}$$
such that the map:
$$\begin{array}{cc}
E*\vec E& \ra  E*E\\
(u,\vec a)&\mapsto (u,u+\vec a)\\
\end{array}$$
is a homeomorphism. A $G$-affine Hilbert bundle is an affine Hilbert bundle endowed with a continuous action $G*E\ra E$ such that each $\gamma\in G$ defines an affine isometry $A(\gamma): E_{s(\gamma)}\ra E_{r(\gamma)}$. Then, we denote by $L(\gamma):\vec E_{s(\gamma)}\ra \vec E_{r(\gamma)}$ the associated  linear isometric action. We shall write $\gamma u$ instead of $A(\gamma)(u)$ when it does not lead to a confusion.

We shall need the following lemma.

\begin{lemma}\label {continuous section} Let $\xi$ be a continuous section of a pull-back bundle $p^*E$, where $p:X\ra Y$ is continuous and open and where $E$ is an affine Banach bundle over $Y$. Let $\alpha=(\alpha_y)_{y\in Y}$ be a continuous $p$-system, where for all $y\in Y$, $\alpha_y$ belongs to $M_c(p^{-1}(y))$ and $\alpha_y(p^{-1}(y))=1$. Then $\alpha(\xi)(y)=\int \xi(x)d\alpha_y(x)$ is a continuous section of $E$.
\end{lemma}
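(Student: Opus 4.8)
The plan is to reduce this affine statement to the linear (vector-bundle) case already available to us via the lemma preceding Theorem~\ref{PTfunction} and via Lemma~\ref{continuous section}'s vector analogue. Choose a reference continuous section $\eta_0\in C(Y,E)$; its pullback $p^*\eta_0$ is a continuous section of $p^*E$. Since $\xi$ is a continuous section of the \emph{affine} bundle $p^*E$ and $p^*\eta_0$ is another one, the difference $\vec\xi=\xi-p^*\eta_0$ is a continuous section of the \emph{linear} pull-back bundle $p^*\vec E$. One subtle point: we should arrange $\xi-p^*\eta_0$ to be compactly supported in a suitable sense, or rather work with the fact that $\xi$ itself need not have compact support but $\alpha$ is a genuine $p$-system of probability measures. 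Actually, since $\alpha_y(p^{-1}(y))=1$, we have the identity
$$\alpha(\xi)(y)=\int \xi(x)\,d\alpha_y(x)=\eta_0(y)+\int \vec\xi(x)\,d\alpha_y(x)=\eta_0(y)+\vec\alpha(\vec\xi)(y),$$
so it suffices to prove that $\vec\alpha(\vec\xi)$ is a continuous section of $\vec E$, where $\vec\alpha$ denotes the same $p$-system viewed as acting on sections of the linear bundle. This is the content of the vector-bundle version of the lemma; but note $\vec\xi$ need not be compactly supported even though $\xi$ might be intended to be, so I should double-check the hypotheses — in the intended application $\xi$ will be a compactly supported section, and then $\vec\xi$ is compactly supported too once $\eta_0$ is chosen to vanish outside a neighbourhood of that support, or one simply restricts attention to a relatively compact open set where everything lives.

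The key steps, in order, are: (1) fix a continuous section $\eta_0$ of $E$ over $Y$ (such exist since $E$ is a Banach bundle, e.g.\ by \cite[Appendix C]{fd:representations} or by the defining fundamental family), and form $\vec\xi=\xi-p^*\eta_0\in C(X,p^*\vec E)$; (2) use $\alpha_y(p^{-1}(y))=1$ to write $\alpha(\xi)(y)=\eta_0(y)+\int\vec\xi\,d\alpha_y$; (3) invoke the linear version already proved in the excerpt (the unnamed Lemma just before Theorem~\ref{PTfunction}, or equivalently Lemma~\ref{continuous section} read in the linear bundle $\vec E$, which there follows from approximating $\vec\xi$ uniformly on a compact set by finite sums $f\otimes\vec\eta$ with $f\in C_c(X)$, $\vec\eta\in C(Y,\vec E)$, for which the integral is $y\mapsto(\int f\,d\alpha_y)\vec\eta(y)$, manifestly continuous because a $p$-system is continuous); (4) conclude that $\vec\alpha(\vec\xi)$ is continuous, hence so is its translate $\eta_0+\vec\alpha(\vec\xi)=\alpha(\xi)$.

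For the approximation step one repeats the argument in the linear Lemma verbatim: locally $p^*\vec E$ is trivial, a compactly supported continuous section is uniformly approximated by linear combinations of decomposable sections $f\otimes\vec\eta$, and $\alpha$ being a continuous $p$-system means exactly that $y\mapsto\int f\,d\alpha_y$ is continuous for $f\in C_c(X)$; continuity is preserved under uniform limits of continuous sections because the norm topology on the Banach bundle is compatible, and the uniform estimate $\|\vec\alpha(\vec\xi)(y)-\vec\alpha(\vec\xi_n)(y)\|\le\|\vec\xi-\vec\xi_n\|_\infty$ uses $\alpha_y$ a probability measure.

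The main obstacle is bookkeeping rather than conceptual: one must be careful that subtracting $p^*\eta_0$ does not destroy compact support (handled by choosing $\eta_0$ appropriately, or localizing), and that the ``affine minus affine equals linear'' manipulation is legitimate fibrewise and continuous globally — this is guaranteed precisely by the homeomorphism condition $(u,\vec a)\mapsto(u,u+\vec a)$ built into the definition of an affine Banach bundle, which makes the difference map $E*_Y E\to\vec E$ continuous. Once that is noted, the proof is a one-line reduction to the already-established linear lemma, exactly paralleling how the lemma after Theorem~\ref{PTfunction} was deduced from its decomposable case. A short proof along these lines should read:

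\begin{proof}
Fix a continuous section $\eta_0$ of $E$ over $Y$. Since $p^*E$ is an affine Banach bundle with underlying vector bundle $p^*\vec E$, and both $\xi$ and $p^*\eta_0$ are continuous sections of $p^*E$, the difference $\vec\xi\defequal\xi-p^*\eta_0$ is a continuous section of $p^*\vec E$ (continuity of the fibrewise subtraction follows from the homeomorphism axiom in the definition of an affine bundle). Because $\alpha_y$ is a probability measure on $p^{-1}(y)$ for each $y$, we have
$$\alpha(\xi)(y)=\int_{p^{-1}(y)}\xi(x)\,d\alpha_y(x)=\eta_0(y)+\int_{p^{-1}(y)}\vec\xi(x)\,d\alpha_y(x).$$
Thus it suffices to show that $y\mapsto\int\vec\xi\,d\alpha_y$ is a continuous section of the Banach bundle $\vec E$, which is the linear case of the lemma: for a decomposable section $\vec\xi=f\otimes\vec\eta$ with $f\in C_c(X)$ and $\vec\eta\in C(Y,\vec E)$ one has $\int(f\otimes\vec\eta)\,d\alpha_y=(\int f\,d\alpha_y)\,\vec\eta(y)$, which is continuous because $\alpha$ is a continuous $p$-system; and an arbitrary continuous compactly supported section of $p^*\vec E$ is a uniform limit of linear combinations of such sections (working in local trivializations over a compact neighbourhood of the support), so continuity passes to the limit using $\|\alpha_y\|=1$. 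Adding back $\eta_0$ shows $\alpha(\xi)$ is continuous.
\end{proof}
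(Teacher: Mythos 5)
Your reduction of the affine statement to a linear one — fix a continuous section $\eta_0\in C(Y,E)$, write $\vec\xi=\xi-p^*\eta_0$ and use $\alpha_y(p^{-1}(y))=1$ to get $\alpha(\xi)=\eta_0+\vec\alpha(\vec\xi)$ — is a legitimate idea and genuinely different from the paper's argument, which stays in the affine category: the paper verifies continuity directly for sections of the form $\xi(x)=\sum_{i}f_i(x)\eta_i(p(x))$ with $f_i\in C(X,\R)$, $\sum_i f_i\equiv 1$, $\eta_i\in C(Y,E)$, and then approximates an arbitrary continuous section \emph{uniformly on compact subsets} by such affine combinations via \cite[Proposition 14.1]{fd:representations}. (The existence of $\eta_0$ is not an issue here, since the paper's own proof likewise uses continuous sections of $E$ over $Y$.)

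However, your treatment of the linear case has a genuine gap. Lemma \ref{continuous section}, unlike the earlier linear lemma, carries \emph{no} compact-support hypothesis on $\xi$ — deliberately so, because where it is applied (end of the proofs of Proposition \ref{CNTchar} and Theorem \ref{CNTfunction}) the sections, e.g. $\gamma\mapsto A'(\gamma)e'\circ s(\gamma)$, are nowhere near compactly supported; the compactness has been shifted onto the measures $\alpha_y\in M_c$. Consequently $\vec\xi=\xi-p^*\eta_0$ is in general not compactly supported, and your proposed fix is not available: a section of an affine bundle has no support (there is no zero section to compare with), and $\xi-p^*\eta_0$ vanishes only where $\xi=p^*\eta_0$, which you cannot arrange outside a compact set by choosing $\eta_0$. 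So the step ``an arbitrary continuous compactly supported section of $p^*\vec E$ is a uniform limit of sums $f\otimes\vec\eta$ with $f\in C_c(X)$'' never applies to your $\vec\xi$; what is actually needed is approximation uniform on compact subsets by sums $\sum_i f_i\,(\vec\eta_i\circ p)$ with $f_i\in C(X,\R)$ merely continuous, combined with local control of the measures (supports of $\alpha_y$ contained in a fixed compact set and total variations locally bounded for $y$ near a given point — which holds for the systems $\alpha_y=f\lambda^y$, $f$ of $r$-compact support, used in the paper). A second, related slip: your uniform estimate invokes ``$\|\alpha_y\|=1$'' as if $\alpha_y$ were a probability measure, but the hypothesis is only $\alpha_y(p^{-1}(y))=1$ with $\alpha_y$ a (signed) measure in $M_c(p^{-1}(y))$; in the intended application $\alpha_y=f\lambda^y$ with $f$ real-valued of integral one, so the total variation can exceed $1$ and the bound $\|\vec\alpha(\vec\xi)(y)-\vec\alpha(\vec\xi_n)(y)\|\le\|\vec\xi-\vec\xi_n\|_\infty$ is unjustified as stated.
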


\begin{proof}
This is true for a section of the form $\xi(x)=\sum_{i=1}^nf_i(x)\eta_i(p(x))$, where $f_1,\ldots,f_n$ belong to  $C(X,\R)$, satisfy $\sum_{i=1}^n f_i(x)=1$ for all $x\in X$ and $\eta_1,\ldots,\eta_n$ are continuous sections of $E\ra Y$. This remains true for an arbitrary continuous section because it can be approximated uniformly on compact subsets by such sections (see \cite[Proposition 14.1]{fd:representations}).

\end{proof}

\begin{proposition}\label{CNTchar} (cf. \cite[section 3.3]{tu:conjecture}) Let $(G,\lambda)$ be a locally compact groupoid with Haar system. Let $\psi: G\ra \R$ be a  continuous function. Then \tfae 
\begin{enumerate}
\item $\psi$ is conditionally of negative type;
\item $\psi$ vanishes on $G^{(0)}$, is symmetric and for all $x\in G^{(0)}$ and all $f\in C_c(G,\R)$ such that $\int f d\lambda^x=0$, we have
$$\int\int \varphi(\gamma^{-1}\gamma')f(\gamma) f(\gamma')d\lambda^x(\gamma)d\lambda^x(\gamma')\le 0;$$
\item there exists a continuous $G$- affine real Hilbert bundle $E$ and a continuous section $e\in C(G^{(0)},E)$ such that
$$\psi(\gamma)=\|e\circ r(\gamma)- \gamma e\circ s(\gamma)\|^2.$$
\end{enumerate}
\end{proposition}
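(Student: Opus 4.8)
The plan is to prove the equivalence by establishing $(i)\Leftrightarrow (ii)$ fiberwise using Proposition \ref{CNTkernel}, and then closing the loop with $(i)\Leftrightarrow (iii)$ by imitating the positive-type argument of Proposition \ref{PTchar}, but working with affine objects and substituting the CNT kernel theorem and Proposition \ref{density} for their positive-type counterparts. For $(i)\Leftrightarrow (ii)$: fix $x\in G^{(0)}$ and apply Proposition \ref{CNTkernel} to the kernel $\psi_x$ on $G^x\times G^x$, with the reference measure $\mu=\lambda^x$; condition $(ii)$ of the present proposition is exactly condition $(ii)$ of Proposition \ref{CNTkernel} rewritten as $\langle\psi_x, (f\lambda^x)\otimes(f\lambda^x)\rangle\le 0$ for $f$ with $\int f\,d\lambda^x=0$. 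One must also record, as noted just after the definition, that $(i)$ forces $\psi$ to vanish on $G^{(0)}$ and to be symmetric, so the standing hypotheses of Proposition \ref{CNTkernel} are met for each $\psi_x$. The implication $(iii)\Rightarrow (i)$ is then immediate: writing $e_x(\gamma)=A(\gamma)e\circ s(\gamma)=\gamma e\circ s(\gamma)\in E_x$, one gets $\psi_x(\gamma,\gamma')=\|\gamma^{-1}\gamma' e\circ s(\gamma')\cdots\|$; more precisely since $A(\gamma)$ is an affine isometry, $\|e_x(\gamma)-e_x(\gamma')\|=\|A((\gamma')^{-1})(e_x(\gamma)-e_x(\gamma'))\|=\|e\circ r(\gamma^{-1}\gamma')-(\gamma^{-1}\gamma')e\circ s(\gamma^{-1}\gamma')\|$, whose square is $\psi(\gamma^{-1}\gamma')=\psi_x(\gamma,\gamma')$, so $\psi_x$ is a difference-of-points kernel on a real affine Hilbert space, hence CNT.

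The substantive part is $(i)\Rightarrow (iii)$, and I would build $E$ by the same three-stage recipe used in Proposition \ref{PTchar}. First, for each $x$ the CNT kernel theorem (the affine GNS construction for $\psi_x$) furnishes a real affine Hilbert space $E_x$ with directing space $\vec E_x$ and a continuous map $e_x:G^x\to E_x$ such that $\psi_x(\gamma,\gamma')=\|e_x(\gamma)-e_x(\gamma')\|^2$ and the affine span of $e_x(G^x)$ is dense in $E_x$. Second, for $\gamma\in G_x^y$ one defines an affine isometry $A(\gamma):E_x\to E_y$ by $A(\gamma)e_x(\gamma')=e_y(\gamma\gamma')$; well-definedness and isometry reduce, exactly as in Proposition \ref{PTchar}, to checking that for $\gamma_i\in G^x$ and $\lambda_i\in\R$ with $\sum\lambda_i=1$ the affine combinations $\sum\lambda_i e_y(\gamma\gamma_i)$ and $\sum\lambda_i e_x(\gamma_i)$ have the same pairwise distances, and both distances expand (via the polarization formula displayed in the proof of the CNT kernel theorem) into the same expression $-\tfrac12\sum_{i,j}\psi(\gamma_i^{-1}\gamma_j)\zeta_i\zeta_j$ for difference vectors. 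The cocycle identity $A(\gamma\gamma')=A(\gamma)A(\gamma')$ follows from uniqueness. Setting $e(x)=e_x(x)\in E_x$ gives a section with $\psi(\gamma)=\|e\circ r(\gamma)-\gamma e\circ s(\gamma)\|^2$, since $\gamma e\circ s(\gamma)=A(\gamma)e_{s(\gamma)}(s(\gamma))=e_{r(\gamma)}(\gamma)$.

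Third — and this is where I expect the real work — I must topologize $E=\coprod_x E_x$ as a $G$-affine Hilbert bundle and verify continuity of the action map. Following Proposition \ref{PTchar}, for $f\in C_c(G,\R)$ with the property that $\int f\,d\lambda^x=1$ for all relevant $x$ (or, more flexibly, work with the directing Hilbert bundle $\vec E$ first, using the linear GNS sections $\vec j(f\vec\lambda)(x)=\int \vec e_x(\gamma)f(\gamma)\,d\lambda^x(\gamma)$ for $\int f\,d\lambda^x=0$, whose norms are continuous in $x$ by the displayed quadratic-form expression, and invoke \cite[Theorem 13.18]{fd:representations} to get $\vec E$), and then realize $E$ as an affine bundle over $\vec E$ using sections $j(f\lambda)(x)=\int e_x(\gamma)f(\gamma)\,d\lambda^x(\gamma)$ for $f\in C_c(G,\R)$ with $\int f\,d\lambda^x=1$; Proposition \ref{density} guarantees these form a fundamental family. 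Continuity of $e$ is checked against this family as in Proposition \ref{PTchar}. Finally, continuity of $G*E\to E$ is reduced, via \cite[Proposition 13.16]{fd:representations}, to checking that $a\circ\xi$ is continuous for $\xi$ in a fundamental family of sections of $s^*E$; taking $\xi$ of the form $\gamma\mapsto g(\gamma)j(f\lambda)(s(\gamma))$ with $g\in C_c(G,\R)$, $\int g\,d\lambda^{y}=1$, the same change-of-variables computation as in Proposition \ref{PTchar} rewrites $a\circ\xi(\gamma)$ as $\int e(\gamma')F(\gamma,\gamma')\,d\lambda^{r(\gamma)}(\gamma')$ with $F\in C_c(G*_rG)$ satisfying $\int F(\gamma,\cdot)\,d\lambda^{r(\gamma)}=1$, and continuity follows from Lemma \ref{continuous section} (its affine hypothesis on the partition-of-unity form of $F$ is precisely why that lemma was stated) together with the density approximation in \cite[Proposition 14.1]{fd:representations}. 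The main obstacle throughout is bookkeeping the affine-versus-linear distinction — ensuring the normalization $\int f\,d\lambda^x=1$ is handled compatibly across fibers with possibly varying mass, which is cleanly resolved by first constructing the directing bundle $\vec E$ linearly and only then adding the affine structure.
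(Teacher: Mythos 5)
Your outline coincides with the paper's: $(i)\Leftrightarrow(ii)$ fiberwise from Proposition \ref{CNTkernel}, $(iii)\Rightarrow(i)$ by writing $\psi_x$ as a squared-distance kernel, and $(i)\Rightarrow(iii)$ by fiberwise affine GNS, defining $A(\gamma)$ through the uniqueness clause, and then topologizing $E=\coprod E_x$ via a fundamental family of sections and checking continuity of $e$ and of the action through $a:s^*E\ra r^*E$ and Lemma \ref{continuous section}. The gap is in the one place where the affine case genuinely differs from Proposition \ref{PTchar}, and you name it but do not resolve it. Your fundamental family consists of sections $j(f\lambda)$ with $f\in C_c(G,\R)$ and $\int f\,d\lambda^x=1$ ``for all relevant $x$'': no compactly supported $f$ can satisfy $\int f\,d\lambda^x=1$ for every $x\in G^{(0)}$ unless $G^{(0)}$ is compact, since $\lambda^x$ is carried by $G^x$ and $r(\mathrm{supp}\,f)$ is compact. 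The same inconsistency recurs in the action step, where you ask for $F\in C_c(G*_rG)$ with $\int F(\gamma,\gamma')\,d\lambda^{r(\gamma)}(\gamma')=1$ for all $\gamma$. The paper's proof exists precisely to handle this: it replaces compact support by $r$-compact support (the affine space ${\mathcal E}$ of normalized functions, and $p_1$-compact support on $G_s\times_rG$ in the action step), and uses a partition-of-unity extension of $f_x\in C_c(G^x,\R)$, $\int f_x\,d\lambda^x=1$, to an element of ${\mathcal E}$ in order to get density of $\Lambda(x)$ in $E_x$ and hence the hypotheses of the affine version of \cite[Theorem 13.18]{fd:representations}. As written, your family of sections is empty over a noncompact unit space and the appeal to Fell--Doran fails. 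A second, smaller slip in the same step: $\gamma\mapsto g(\gamma)\,j(f\lambda)(s(\gamma))$ is not a section of the affine bundle $s^*E$ (scalar multiples of points of an affine space are undefined, and $\int g\,d\lambda^y=1$ cannot hold for $g\in C_c(G,\R)$ either); the fundamental sections must be fiberwise affine combinations, which is why the paper takes $\xi(\gamma)=\int e(\gamma')f(\gamma,\gamma')\,d\lambda^{s(\gamma)}(\gamma')$ with $f$ normalized in the second variable.

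Your fallback --- construct the direction bundle $\vec E$ first from zero-mass sections $\vec j(f\lambda)$ and ``then add the affine structure'' --- is a legitimate alternative route, but it is asserted rather than executed, and the missing part is exactly the content of the theorem. Building $\vec E$ is fine (though even there the density of $\{\vec j_x(f\lambda^x)\}$ in $\vec E_x$ needs an extension argument: a zero-integral $f_x\in C_c(G^x,\R)$ must be extended to $f\in C_c(G,\R)$ with $\int f\,d\lambda^y=0$ for \emph{all} $y$, which requires a correction term, not just a cutoff). But to topologize $E$ from $\vec E$ you need a continuous global identification, and the only available base point is the section $e$ itself; with that choice, statement $(iii)$ and the continuity of the $G$-action reduce to proving (a) that $L$ acts continuously on $\vec E$ and (b) that the cocycle $c(\gamma)=\gamma e\circ s(\gamma)-e\circ r(\gamma)$ is a continuous section of $r^*\vec E$. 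Point (b) is not automatic: it requires, e.g., continuity of $\|c(\gamma)\|^2=\psi(\gamma)$ together with continuity of $\gamma\mapsto\bigl(c(\gamma)\,\big|\,\vec j(f\lambda)\circ r(\gamma)\bigr)$, which one gets from the polarization identity $-\tfrac12\int\bigl[\psi(\gamma^{-1}\gamma')-\psi(\gamma')\bigr]f(\gamma')\,d\lambda^{r(\gamma)}(\gamma')$; nothing of this appears in your argument. So either supply the $r$-compact/$p_1$-compact normalized section classes as in the paper, or carry out the $\vec E$-first route including (a) and (b); as it stands the key step $(i)\Rightarrow(iii)$ is not proved.
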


\begin{proof} The equivalence $(i)\Leftrightarrow (ii)$ is a consequence of Proposition \ref{CNTkernel}. It also uses the fact that if $f\in C_c(G,\R)$, $f_{|G^x}\in C_c(G^x,\R)$ and that for every $g\in C_c(G^x,\R)$, there exists $f\in C_c(G,\R)$ such that $f_{|G^x}=g$. The implication $(iii)\Rightarrow (i)$ is easy: if $(iii)$ holds, we can write 
$\psi_x(\gamma,\gamma')=\|e_x(\gamma)-e_x(\gamma')\|^2$ where $e_x(\gamma)=\gamma e\circ s(\gamma)$ for $\gamma\in G^x$; therefore $\psi_x$ is conditionally of negative type. Let us show the implication $(ii)\Rightarrow (iii)$. The construction of the affine Hilbert bundle $E$ is similar to the construction of the Hilbert bundle in Proposition \ref{PTchar}. For each $x\in G^{(0)}$, the function $\psi_x: G^x\times G^x\ra\R$ is a kernel conditionally of negative type. The GNS construction of the previous section provides an affine real Hilbert space $E_x$ and a continuous function $e_x: G^x\ra E_x$ such that, for $(\gamma, \gamma')\in G^x\times G^x$, $\psi_x(\gamma,\gamma')=\|e_x(\gamma)-e_x(\gamma')\|^2$ and the set $\{e_x(\gamma), \gamma\in G^x\}$ is total in $E_x$. Let $\gamma\in G_x^y$. The pair $(E_y,e')$, where $e': G^x\ra E_y$ is defined by $e'(\gamma')=e_y(\gamma\gamma')$ satisfies the conditions $(a)$ and $(b)$ of Theorem \ref{CNTkernel}. Therefore, there is a unique affine isometry $A(\gamma): E_x\ra E_y$ such that $A(\gamma)e_x(\gamma')=e_y(\gamma\gamma')$. 
By construction we can write
$$\psi(\gamma)=\|e\circ r(\gamma)-A(\gamma)e\circ s(\gamma)\|^2$$
where $e$ is the section $G^{(0)}\ra E=\coprod E_x$ defined by $e(x)=e_x(x)$. The next step is to endow $E=\coprod E_x$ and $\vec E=\coprod \vec E_x$ with topologies turning $(E,\vec E)$ into an affine real Hilbert bundle. As usual, this is done by giving fundamental families of continuous sections. Let us say that a closed subset $A\subset G$ is $r$-compact if for each compact subset $K\subset G^{(0)}$, $A\cap r^{-1}(K)$ is compact. We denote by ${\mathcal E}$ the real affine space of continuous real-valued functions $f$ on $G$ with $r$-compact support such that for all $ x\in G^{(0)}$, $\int f d\lambda^x=1$.  Its direction space $\vec{\mathcal E}$ is the linear space of fucntions $f$ such that for all $ x\in G^{(0)}$, $\int f d\lambda^x=0$. Given $f\in{\mathcal E}$, we define the section of $E$:
$$j(f\lambda)(x)=j_x(f\lambda^x)=\int e_x(\gamma)f(\gamma)d\lambda^x(\gamma).$$
Consider the affine space of sections $\Lambda=\{j(f\lambda): f\in {\mathcal E}\}$. Let us check that it satisfies the conditions $(a)$ and $(b)$ of \cite[Theorem 13.18]{fd:representations} (or rather its affine version). Condition $(a)$ is the continuity of the distance function. Given $f,g\in{\mathcal E}$, the distance function $x\mapsto \|j(f\lambda)(x)-j(g\lambda)(x)\|$ is continuous since:
$$\|j(f\lambda)-j(g\lambda)\|_x^2=-{1\over 2}\int\int \psi(\gamma^{-1}\gamma')(f-g)(\gamma)(f-g)(\gamma')d\lambda^x(\gamma)d\lambda^x(\gamma').$$
Condition $(b)$ is the density, for each $x\in G^{(0)}$ of $\Lambda(x)=\{j_x(f\lambda^x): f\in {\mathcal E}\}$ in $E_x$. We know that $\{j_x(f_x\lambda^x): f_x\in C_c(G^x,\R), \int f_xd\lambda^x=1\}$ is dense in $E_x$. But this space is $\Lambda(x)$: by using a partition of unity, one can extend $f_x\in C_c(G^x,\R)$ such that $\int f_xd\lambda^x=1$ to $f\in{\mathcal E}$. The topology of $E$ is defined by the following basis of open sets: we fix $\xi=j(f\lambda)\in\Lambda$, $U$ open subset of $X$ and $\epsilon>0$ and set
$$W(\xi, U,\epsilon)=\{u\in E: \pi(u)\in U, \|u-\xi\circ\pi(u)\|<\epsilon\}.$$
where $\pi:E\ra X$ is the projection. By using similarly $\vec{\mathcal E}$ and $\vec\Lambda=\{\vec j(f\lambda): f\in \vec{\mathcal E}\}$, we endow $\vec E=\coprod \vec E_x$ with a topology of real Hilbert bundle. This turns $(E,\vec E)$ into a real affine Hilbert bundle. Next, we have to check the continuity of the section $e$. It suffices to check the continuity, for all $f\in{\mathcal E}$ of the function
$x\mapsto \|e(x)-j(f\lambda)(x)\|^2$; this is clear because this last expression is just:
$$\int\psi(\gamma)f(\gamma)d\lambda^x(\gamma)-{1\over 2}\int\psi(\gamma^{-1}\gamma')f(\gamma)f(\gamma')d\lambda^x(\gamma)d\lambda^x(\gamma').$$
It remains to check the continuity of the action map $(\gamma,u)\in G*E\mapsto A(\gamma)u\in E$. We proceed as in the proof of Proposition \ref{PTchar}: we prove the continuity of the bundle map $a: s^*E\ra r^*E$ sending $(\gamma, u)\in G\times E_{s(\gamma)}$ to $(\gamma, A(\gamma)u)\in G\times E_{r(\gamma)}$ by checking that it sends the sections of a fundamental family of continuous sections of $s^*E$ to continuous sections of $r^*E$. Let us take a section $\xi$ of $s^*E$ of the form
$$\xi(\gamma)=\int e(\gamma')f(\gamma,\gamma')d\lambda^{s(\gamma)}(\gamma')$$
where $f$ is continuous real-valued function  on $G_s\times_rG=\{(\gamma,\gamma')\in G\times G: s(\gamma)=r(\gamma')\}$ with $p_1$-compact support, where $p_1:G_s\times_rG\ra G$ is the first projection, and such that for all $\gamma\in G$, $\int f(\gamma,\gamma')d\lambda^{s(\gamma)}(\gamma')=1$. According to Lemma \ref{continuous section}, it is continuous; moreover these sections form a fundamental family of continuous sections of $s^*E$. Then
$$a\circ\xi(\gamma)=\int e(\gamma\gamma')f(\gamma,\gamma')d\lambda^{s(\gamma)}(\gamma')=\int e(\gamma')g(\gamma,\gamma')d\lambda^{r(\gamma)}(\gamma')$$
where $g$ is the function defined on $G_r\times_rG$ by $g(\gamma,\gamma')=f(\gamma,\gamma^{-1}\gamma')$. It is continuous, it has a support $p_1$-compact and satisfies $\int f(\gamma,\gamma')d\lambda^{r(\gamma)}(\gamma')=1$ for all $\gamma\in G$. By the same token, $a\circ\xi$ is a continuous section or $r^*E$.
\end{proof}
 
 We conclude by the main result about representation of continuous conditionally negative type functions on groupoids which generalizes Theorem \ref{CNTkernel}.

\begin{theorem}\label{CNTfunction}
Let $G$ be a locally compact groupoid with Haar system and let
$\psi$ be a real continuous function on $G$ conditionally of negative type. Then,
\begin{enumerate}
\item there exists a continuous $G$-real affine Hilbert bundle $E$ and a continuous map $e:G^{(0)}\ra E$ such that
\begin{enumerate}
\item for all $\gamma\in G$, $\psi(\gamma)=\|e\circ r(\gamma)- A(\gamma) e\circ s(\gamma)\|^2$.
\item for all $x\in G^{(0)}$, the affine span of $\{A(\gamma) e\circ s(\gamma), \gamma\in G^x\}$ is dense in $E_x$.
\end{enumerate}
\item the pair $(E,e)$ is unique, in the sense that if $(E',e')$ also satisfies the conditions of $(i)$, there exists a unique isomorphism of $G$-real affine Hilbert bundles 
$u:E\ra E'$ such that $e'=u\circ e$.
\end{enumerate}
\end{theorem}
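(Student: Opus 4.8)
The plan is to follow the proof of Theorem \ref{PTfunction} essentially line by line, with the GNS theorem for conditionally negative type kernels proved above playing the role that Theorem \ref{kernel} plays in the positive type case. Existence is already in hand: the pair $(E,e)$ constructed in the proof of Proposition \ref{CNTchar} satisfies $(a)$ by construction, and since $A(\gamma)e\circ s(\gamma)=e_x(\gamma)$ for $\gamma\in G^x$ while the GNS construction makes $\{e_x(\gamma):\gamma\in G^x\}$ total in $E_x$, it satisfies $(b)$ as well. So the content is the uniqueness clause $(ii)$.

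For uniqueness I would argue fiberwise first. Let $(E',e')$ be a second pair satisfying $(a)$ and $(b)$, with affine isometric action $A'$, and put $e'_x(\gamma)=A'(\gamma)e'\circ s(\gamma)$ for $\gamma\in G^x$. Then $(E'_x,e'_x)$ realizes the kernel $\psi_x$ (by $(a)$) and has dense affine span (by $(b)$), so the uniqueness part of the GNS theorem for conditionally negative type kernels gives, for each $x\in G^{(0)}$, a unique affine isometry $u_x$ of $E_x$ onto $E'_x$ with $e'_x=u_x\circ e_x$. These assemble into a bijective bundle map $u:E\ra E'$ with a well-defined linear part $\vec u:\vec E\ra\vec E'$ given by the family $(\vec u_x)$. $G$-equivariance is then a rigidity argument: for $\gamma\in G$ the affine isometries $u_{r(\gamma)}\circ A(\gamma)$ and $A'(\gamma)\circ u_{s(\gamma)}$ from $E_{s(\gamma)}$ to $E'_{r(\gamma)}$ both send $e_{s(\gamma)}(\gamma')$ to $e'_{r(\gamma)}(\gamma\gamma')$, hence agree on a total set and coincide; likewise for $\vec u$.

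The only genuinely nontrivial step — and the one I expect to be the main obstacle — is the continuity of $u$; continuity of $\vec u$ is entirely parallel (using $\vec\Lambda$ in place of $\Lambda$), and continuity of $u^{-1}$ follows by the symmetric argument, so that $u$ is an isomorphism of $G$-real affine Hilbert bundles. As in the proof of Theorem \ref{PTfunction}, it suffices to check that $u$ carries each section in a fundamental family of continuous sections of $E$ to a continuous section of $E'$, and I would use the family $\Lambda=\{j(f\lambda):f\in{\mathcal E}\}$ from the proof of Proposition \ref{CNTchar}. Since $u_x$ is affine with linear part $\vec u_x$ and $e'_x=u_x\circ e_x$, one has $u\circ j(f\lambda)(x)=\int A'(\gamma)\,e'\circ s(\gamma)\,f(\gamma)\,d\lambda^x(\gamma)\in E'_x$. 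Here $\gamma\mapsto A'(\gamma)e'\circ s(\gamma)$ is a continuous section of the pull-back bundle $r^*E'$ — this uses the continuity of $e'$ together with the continuity of the action map $G*E'\ra E'$, which is part of the definition of a $G$-affine Hilbert bundle — while for $f\in{\mathcal E}$ the family $(f\lambda^x)_{x\in G^{(0)}}$ is a continuous $r$-system of the type required in Lemma \ref{continuous section} (compactly supported on each $G^x$, of total mass one since $\int f\,d\lambda^x=1$). Lemma \ref{continuous section}, applied with $p=r$, then yields the continuity of $x\mapsto u\circ j(f\lambda)(x)$. Finally, uniqueness of $u$ itself is immediate: any isomorphism $v$ with $v\circ e=e'$ satisfies $v_x\circ e_x=e'_x$ on a total set, so $v_x=u_x$ for every $x$.
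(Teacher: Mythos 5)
Your proposal is correct and follows essentially the same route as the paper: existence is quoted from the construction in Proposition \ref{CNTchar}, uniqueness is obtained fiberwise from the GNS theorem for conditionally negative type kernels, and continuity of $u$ is checked on the fundamental family $\Lambda=\{j(f\lambda):f\in{\mathcal E}\}$ by writing $u\circ j(f\lambda)(x)=\int A'(\gamma)e'\circ s(\gamma)f(\gamma)\,d\lambda^x(\gamma)$ and invoking Lemma \ref{continuous section}. You in fact spell out a few points the paper leaves implicit ($G$-equivariance by rigidity on a total set, the parallel argument for $\vec u$ and for $u^{-1}$), but the decomposition and the key lemma are identical.
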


\begin{proof}
The existence of $E$ and $e$ is given above. Note that for $\gamma\in G^x$, $A(\gamma)e\circ s(\gamma)=e_x(\gamma)$. By construction, for all $x\in G^{(0)}$,
$\{e_x(\gamma), \gamma\in G^x\}$ is total in $E_x$.
\sk
Let us show the uniqueness of $(E,e)$ as stated in $(ii)$. Let $(E',e')$ be a pair  satisfying $(a)$ and $(b)$. For each $x\in G^{(0)}$, according to Theorem \ref{CNTkernel} there exists a unique affine isometry $u_x:E_x\ra E'_x$ such that $e'_x=u_x\circ e_x$, where $e'_x: G^x\ra E'_x$ is defined by $e'_x(\gamma)=L'(\gamma)e'\circ s(\gamma)$. This gives a bundle map $u:E\ra E'$ which is $G$-equivariant. In order to show that it is continuous, it suffices to check that for all sections $\xi$ in a fundamental family of continuous sections of $E$, $u\circ\xi$ is a continuous section of $E'$. Let us consider $\xi=j(f\lambda)$, where $f$ is a continuous real function on $G$ with $r$-compact support and such that $\int f d\lambda^x=1$ for all $x\in G^{(0)}$. Then $u\circ\xi=j'(f\lambda)$, where
$$j'(f\lambda)(x)=\int A'(\gamma)e'\circ s(\gamma)f(\gamma)d\lambda^x(\gamma)\in E'_x.$$
Let us show that this section of $E'$  is  continuous. The map
$$\gamma\mapsto A'(\gamma)e'\circ s(\gamma)$$
is a continuous section of the pull-back bundle $r^*E'$. The conclusion is given by Lemma \ref{continuous section}.
\end{proof}

The above theorem can be expressed in terms of linear representations and cocycles.

\begin{definition} Let $\vec E$ be a $G$-vector bundle. A (one-)cocycle is a section $c: G\ra r^*\vec E$ (i.e. $c(\gamma)\in \vec E_{r(\gamma)}$) such that $c(\gamma\gamma')=c(\gamma)+L(\gamma)c(\gamma')$ where $L$ denotes the linear action of $G$. It is a coboundary if there exists a section $\xi: X\ra \vec E$ such that $c(\gamma)=\xi\circ r(\gamma)-L(\gamma)\xi\circ s(\gamma)$.
\end{definition}

In our case, $G$ is a topological groupoid, $\vec E$ is a a $G$-Hilbert bundle and we shall only consider continuous cocycles and continuous sections.
The relation between cocycles and affine bundles is well-known: let $E$ be a $G$-affine Hilbert bundle. A continuous section $e: G^{(0)}\ra E$ identifies $E$ with $\vec E$ (as topological spaces) and defines a continuous section $c: G\ra r^*\vec E$ (where $r^*\vec E$ is the pull-back along $r: G\ra G^{(0)}$) according to:
$$c(\gamma)=A(\gamma)e\circ s(\gamma)-e\circ r(\gamma).$$
$c$ satisfies the cocycle property:
$$c(\gamma\gamma')=c(\gamma)+L(\gamma)c(\gamma'),$$
where $L$ is the linear part of the affine action $A$.
Another choice of section gives a cohomologous cocycle.
Conversely, a continuous cocycle $c:G\ra r^*\vec E$ endows $E=\vec E$ with a $G$-affine action according to:
$A(\gamma)e=c(\gamma)+L(\gamma)e$. The $G$-affine Hilbert bundle defined by $c$ will be denoted by $E(c)$.

Here is another formulation of the theorem (cf. \cite[section 3.3]{tu:conjecture}).

\begin{theorem}\label{GNS} Let $(G,\lambda)$ be a locally compact groupoid with Haar system. Let $\psi: G\ra \R$ be a  continuous function conditionally of negative type. Then 
\begin {enumerate}
\item There exists a pair $(\vec E, c)$ consisting of a continuous $G$-Hilbert bundle $\vec E$ and a continuous cocycle $c:G\ra r^*\vec E$ such that
\begin{itemize}
\item for all $\gamma\in G$, $\psi(\gamma)=\|c(\gamma)\|^2$;
\item for all $x\in G^{(0)}$, $\{c(\gamma), \gamma\in G^x\}$ is total in $\vec E_x$.
\end{itemize}
\item If $(\vec E', c')$ is another pair satisfying the same properties, there exists a $G$-equivariant linear isometric continuous bundle map $u: \vec E\ra \vec E'$ such that $c'=u\circ c$.
\end{enumerate}
\end{theorem}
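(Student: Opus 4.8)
The plan is to read the statement off Theorem~\ref{CNTfunction} through the dictionary between $G$-affine real Hilbert bundles and continuous cocycles recalled just above the statement. Let $(E,e)$ be the pair furnished by Theorem~\ref{CNTfunction}: thus $E$ is a continuous $G$-real affine Hilbert bundle, $e\in C(G^{(0)},E)$, one has $\psi(\gamma)=\|e\circ r(\gamma)-A(\gamma)e\circ s(\gamma)\|^2$ for all $\gamma\in G$, and for each $x\in G^{(0)}$ the affine span of $\{A(\gamma)e\circ s(\gamma):\gamma\in G^x\}$ is dense in $E_x$. I would take $\vec E$ to be the direction bundle of $E$, which is a continuous $G$-Hilbert bundle with linear action $L$, and define $c:G\ra r^*\vec E$ by $c(\gamma)=A(\gamma)e\circ s(\gamma)-e\circ r(\gamma)\in\vec E_{r(\gamma)}$. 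As noted in the paragraph preceding the statement, $c$ is continuous (this uses continuity of the action map $G*E\ra E$ and the affine bundle structure) and satisfies the cocycle identity $c(\gamma\gamma')=c(\gamma)+L(\gamma)c(\gamma')$, which is the routine computation using $A(\gamma\gamma')=A(\gamma)A(\gamma')$ and $A(\gamma)(p+v)=A(\gamma)p+L(\gamma)v$. The first bullet is then immediate from the definition of $c$ and the identity $\psi(\gamma)=\|e\circ r(\gamma)-A(\gamma)e\circ s(\gamma)\|^2$.

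For the totality asserted in the second bullet, fix $x\in G^{(0)}$ and note that the unit $x$ itself lies in $G^x$, with $A(x)e\circ s(x)=e(x)$. Hence the affine span appearing in condition (b) of Theorem~\ref{CNTfunction} equals $e(x)$ translated by the linear span of $\{A(\gamma)e\circ s(\gamma)-e(x):\gamma\in G^x\}$, and since $r(\gamma)=x$ for $\gamma\in G^x$ these vectors are exactly the $c(\gamma)$, $\gamma\in G^x$. Therefore density of that affine span in $E_x$ is the same as density of the linear span of $\{c(\gamma):\gamma\in G^x\}$ in $\vec E_x$, i.e.\ the claimed totality.

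For (ii), let $(\vec E',c')$ be a second pair with the same properties. Form the associated $G$-affine Hilbert bundle $E'=E(c')$; its zero section $e'$ satisfies $c'(\gamma)=A'(\gamma)e'\circ s(\gamma)-e'\circ r(\gamma)$, and the hypotheses on $c'$ say precisely that $(E',e')$ satisfies conditions (a) and (b) of Theorem~\ref{CNTfunction}. By the uniqueness part of that theorem there is a unique isomorphism of $G$-real affine Hilbert bundles $u:E(c)\ra E'$ with $e'=u\circ e$; let $\vec u:\vec E\ra\vec E'$ be its linear part, which is a $G$-equivariant, isometric, continuous bundle map. Using $u\circ A(\gamma)=A'(\gamma)\circ u$ and $u\circ e=e'$ one gets
\[
\vec u(c(\gamma))=u(A(\gamma)e\circ s(\gamma))-u(e\circ r(\gamma))=A'(\gamma)(e'\circ s(\gamma))-e'\circ r(\gamma)=c'(\gamma),
\]
so $\vec u$ is the required map.

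I do not expect a genuine obstacle here: the argument is essentially a transcription of Theorem~\ref{CNTfunction} through the affine-bundle/cocycle correspondence. The only points needing care are checking that $c$ is honestly a continuous section of $r^*\vec E$ — which rests on the definition of the topology on the affine bundle $E$ and on the same Lemma~\ref{continuous section}-style reasoning already used in Proposition~\ref{CNTchar} — and the small bit of affine bookkeeping converting the density of an affine span into the totality of a linear span.
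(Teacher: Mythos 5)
Your proof is correct and follows essentially the paper's own route: the paper presents Theorem \ref{GNS} as merely ``another formulation'' of Theorem \ref{CNTfunction}, obtained through the affine-bundle/cocycle dictionary recalled just before the statement, and gives no further argument. Your write-up simply makes explicit the details the paper leaves implicit (continuity and the cocycle identity for $c(\gamma)=A(\gamma)e\circ s(\gamma)-e\circ r(\gamma)$, the conversion of the dense affine span into totality of $\{c(\gamma):\gamma\in G^x\}$ using the unit $x\in G^x$, and the transfer of uniqueness by passing to $E(c')$ with its zero section and taking the linear part of the affine isomorphism).
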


\section{Correspondences}

\subsection{Correspondences} Let us recall the definition of a correspondence in the C*-algebraic framework. A standard reference for Hilbert C*-modules (abbreviated here to C*-modules) is \cite{lan:C*}. Our C*-modules will be right C*-modules. Given a C$^*$-module $\mathcal E$ over the C*-algebra $B$, ${\mathcal L}_B({\mathcal E})$ is the C*-algebra of bounded adjointable $B$-linear maps and ${\mathcal K}_B({\mathcal E})$ is the ideal of ``compact'' operators.
\begin{definition}\label{C*-correspondence}  Let $A$ and
$B$ be  C$^*$-algebras. An {\it $(A,B)$-C*-correspondence} is a right
$B$-C$^*$-module
$\mathcal E$ together with a $*$-homomorphism $\pi:A\rightarrow {\mathcal L}_B({\mathcal E})$.
\end{definition}

We shall usually view an $(A,B)$-C*-correspondence as an $(A,B)$-bimodule. There is a composition of correspondences: given a C*-correspondence $\mathcal E$ from $A$ to $B$ and a
C*-correspondence $\mathcal F$ from $B$ to $C$, one can construct the C*-correspondence
${\mathcal E}\otimes_B{\mathcal F}$ from $A$ to $C$. It is the $C$-C$^*$-module obtained by separation and
completion of the ordinary tensor product ${\mathcal E}\otimes{\mathcal F}$ with respect to the inner product
$$<x\otimes y,x'\otimes y'>=<y,<x,x'>_B y'>_C,\qquad x,x'\in {\mathcal E}\quad y,y'\in {\mathcal F};$$
the left $A$ action is given by $a(x\otimes y)=ax\otimes y$ for $a\in A$.

\subsection{G-Hilbert bundles and correspondences} Let $G$ be a locally compact groupoid endowed with a Haar system. We are going to define a covariant functor from the category of $G$-Hilbert bundles to the category of $(A,A)$-correspondences, where $A$ is either the full or the reduced C*-algebra of $G$. Moreover, this functor transforms the tensor product of $G$-Hilbert bundles into the composition of correspondences. This functor is well-known in the case when $G=X$ is a space, where it generalizes the construction of a projective finitely generated module from a vector bundle, and in the case when $G$ is a group, where it motivates the definition of propery (T) for von Neumann algebras (see \cite{cj:T}). The construction for locally compact groupoids with Haar system is given in Section 7.2 of P.-Y. Le Gall's Ph.D. thesis \cite{leg:these}, in the more general context of groupoid crossed products, as part of the descent functor from the equivariant KK-theory to the KK-theory of the crossed products. Since this reference is not readily available, it may be useful to  recall this construction here. We shall give a more pedestrian presentation and relate it to the elegant construction of \cite{leg:these}.

The data are a groupoid with Haar system $(G,\lambda)$ and a $G$-Hilbert bundle $E$. The $*$-algebra $C_c(G)$ is defined as usual (see \cite{ren:approach}). We denote by $C_c(G,r^*E)$ the space of continuous sections of the pull-back bundle $r^*E$.

We define the left and right actions of $C_c(G)$ on $C_c(G,r^*E)$: for $f,g\in C_c(G)$ and $\xi\in C_c(G,r^*E)$,
$$f\xi(\gamma)=\int f(\gamma')L(\gamma')\xi({\gamma'}^{-1}\gamma)d\lambda^{r(\gamma)}(\gamma')$$
$$\xi g(\gamma)=\int \xi(\gamma\gamma') g({\gamma'}^{-1})d\lambda^{s(\gamma)}(\gamma')$$
and the right inner product: for $\xi, \eta\in C_c(G)$,
$$<\xi,\eta>(\gamma)= \int (\xi({\gamma'}^{-1})|\eta({\gamma'}^{-1}\gamma))_{s(\gamma')}d\lambda^{r(\gamma)}(\gamma').$$
We will complete $C_c(G,r^*E)$ into a C*-correspondence in the above sense.
We define ${\mathcal E}=C^*(G,r^*E)$ [resp. $C_r^*(G,r^*E)$] as the completion of $C_c(G,r^*E)$ with respect to the norm $\|\xi\|=\|<\xi,\xi>\|_A$ where $A=C^*(G)$ [resp. $C_r^*(G)$]. We shall check that the left action of $C_c(G)$ extends to a $*$-homomorphism $\pi:A\rightarrow {\mathcal L}_A({\mathcal E})$. Let ${\mathcal F}=C_0(G^{(0)},E)$ denote the space of continuous sections of $E$ vanishing at infinity; it  is a right C*-module over $B=C_0(G^{(0)})$.  On the other hand, $A$ is naturally a $(B,A)$-correspondence since $B$ is a subalgebra of the multiplier algebra of $A$. Le Gall associates to ${\mathcal F}$ the right $A$-C*-module ${\mathcal F}\otimes_B A$. 

\begin{lemma}\label{Le Gall} Let $(G,\lambda)$ be a locally compact groupoid with Haar system and let $E$ be a $G$-Hilbert bundle. Then
\begin{enumerate}
\item $\,{\mathcal E}=C^*(G,r^*E)$  is isomorphic as a right C*-module over $A=C^*(G)$ to  ${\mathcal F}\otimes_B A$, where ${\mathcal F}=C_0(G^{(0)},E)$ and $B=C_0(G^{(0)})$.
\item a similar result holds for $\,{\mathcal E}=C_r^*(G,r^*E)$ and $A=C_r^*(G)$.
\end{enumerate}
\end{lemma}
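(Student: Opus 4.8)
The plan is to build the isomorphism $\Psi:{\mathcal F}\otimes_B A\to {\mathcal E}$ at the level of the dense subspaces and check it is isometric, hence extends. On the algebraic tensor product $C_0(G^{(0)},E)\otimes_{B} C_c(G)$ (which is dense in ${\mathcal F}\otimes_B A$ since $C_c(G)$ is dense in $A$ and the $B$-action is nondegenerate), define
$$\Psi(\eta\otimes f)(\gamma)=\bigl(\eta\circ r(\gamma)\bigr)\, f(\gamma)\in E_{r(\gamma)},\qquad \eta\in C_0(G^{(0)},E),\ f\in C_c(G).$$
The right-hand side is a continuous section of $r^*E$ with compact support, so $\Psi(\eta\otimes f)\in C_c(G,r^*E)\subset{\mathcal E}$. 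First I would check that $\Psi$ is well defined, i.e. that it kills the relations $\eta b\otimes f-\eta\otimes bf$ for $b\in B=C_0(G^{(0)})$; this is immediate since $(\eta b)\circ r(\gamma)\,f(\gamma)=\eta\circ r(\gamma)\,b(r(\gamma))f(\gamma)=\eta\circ r(\gamma)\,(bf)(\gamma)$, using that $B$ acts on $A$ as multipliers via $r$ (equivalently, pointwise after pulling back along $r$). Then I would verify $A$-linearity on the right: $\Psi((\eta\otimes f)g)=\Psi(\eta\otimes fg)=\Psi(\eta\otimes f)g$, which reduces to the identity $\bigl(\eta\circ r(\gamma)\bigr)(fg)(\gamma)=\bigl(\Psi(\eta\otimes f)g\bigr)(\gamma)$; expanding the right action of $C_c(G)$ on $C_c(G,r^*E)$ given in the text, one gets $\int \eta\circ r(\gamma)\,f(\gamma\gamma')g({\gamma'}^{-1})\,d\lambda^{s(\gamma)}(\gamma')=\eta\circ r(\gamma)\int f(\gamma\gamma')g({\gamma'}^{-1})\,d\lambda^{s(\gamma)}(\gamma')=\eta\circ r(\gamma)\,(fg)(\gamma)$, since $r(\gamma\gamma')=r(\gamma)$ on the support of the integrand.

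The heart of the matter is the inner product computation. For $\eta,\eta'\in C_0(G^{(0)},E)$ and $f,f'\in C_c(G)$ I would compute, on the one hand, $\langle \eta\otimes f,\eta'\otimes f'\rangle_{{\mathcal F}\otimes_B A}=\langle f,\langle\eta,\eta'\rangle_B\,f'\rangle_A$, where $\langle\eta,\eta'\rangle_B\in C_0(G^{(0)})$ is the function $x\mapsto(\eta(x)\,|\,\eta'(x))_x$ and it acts on $f'\in C_c(G)$ by $(\langle\eta,\eta'\rangle_B f')(\gamma)=(\eta\circ r(\gamma)\,|\,\eta'\circ r(\gamma))_{r(\gamma)}f'(\gamma)$; so this $A$-valued inner product is $\gamma\mapsto\int \overline{f(\gamma_1)}\,(\eta\circ r(\gamma_1)\,|\,\eta'\circ r(\gamma_1))\,f'(\gamma_1\gamma)\,d\lambda^{r(\gamma)}(\gamma_1)$ after writing out the convolution-type $A$-inner product $\langle f,h\rangle(\gamma)=\int\overline{f(\gamma_1)}h(\gamma_1\gamma)\,d\lambda^{r(\gamma)}(\gamma_1)$ on $C_c(G)$ and using $r(\gamma_1)=r(\gamma)$ under the Haar integral with base point $r(\gamma)$. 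On the other hand, $\langle\Psi(\eta\otimes f),\Psi(\eta'\otimes f')\rangle_{{\mathcal E}}(\gamma)=\int\bigl(\Psi(\eta\otimes f)({\gamma'}^{-1})\,\bigm|\,\Psi(\eta'\otimes f')({\gamma'}^{-1}\gamma)\bigr)_{s(\gamma')}d\lambda^{r(\gamma)}(\gamma')$, and since $\Psi(\eta\otimes f)({\gamma'}^{-1})=\eta\circ r({\gamma'}^{-1})\,f({\gamma'}^{-1})=\eta\circ s(\gamma')\,f({\gamma'}^{-1})$ and $\Psi(\eta'\otimes f')({\gamma'}^{-1}\gamma)=\eta'\circ s(\gamma')\,f'({\gamma'}^{-1}\gamma)$, the inner product in the fiber $E_{s(\gamma')}$ factors as $\overline{f({\gamma'}^{-1})}\,f'({\gamma'}^{-1}\gamma)\,(\eta\circ s(\gamma')\,|\,\eta'\circ s(\gamma'))$. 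Substituting $\gamma_1={\gamma'}^{-1}$ (so $\gamma'=\gamma_1^{-1}$, $s(\gamma')=r(\gamma_1)$, and $d\lambda^{r(\gamma)}(\gamma')$ corresponds to $d\lambda^{r(\gamma)}(\gamma_1)$ under inversion relative to the base point $r(\gamma)$ — here one uses that the Haar system pushes to itself under inversion in the sense built into the definition of the convolution product) gives exactly the same expression as the first computation. Hence $\Psi$ is inner-product preserving on the algebraic tensor product.

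An inner-product-preserving map between pre-Hilbert $A$-modules is automatically bounded (norm $\le 1$) and extends to the completions, giving an isometric $A$-linear map $\Psi:{\mathcal F}\otimes_B A\to{\mathcal E}$. It remains to see that $\Psi$ has dense range; its range contains all sections of the form $\gamma\mapsto\eta\circ r(\gamma)f(\gamma)$, and finite sums of these are dense in $C_c(G,r^*E)$ in the inductive-limit topology (any $\xi\in C_c(G,r^*E)$ can be uniformly approximated on its compact support by such sums, by a partition-of-unity/local-triviality argument, exactly as in the reductions used in the proof of Proposition \ref{PTchar} and Lemma \ref{continuous section}, invoking \cite[Proposition 14.1]{fd:representations}), and inductive-limit convergence implies convergence in the ${\mathcal E}$-norm. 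Since ${\mathcal E}$ is the completion of $C_c(G,r^*E)$, $\Psi$ is onto, hence a unitary of right $A$-C*-modules. This proves $(i)$. For $(ii)$, the identical argument applies verbatim with $C^*$ replaced by $C_r^*$ everywhere: the formulas for the actions and inner products are the same, only the norm (hence completion) changes, and the reduced norm is defined so that the regular representation is faithful on it, so the computation of $\langle\Psi\,\cdot,\Psi\,\cdot\rangle$ in $C_r^*(G)$ matches $\langle\,\cdot,\cdot\,\rangle$ in ${\mathcal F}\otimes_B C_r^*(G)$ by the same identities. The main obstacle I anticipate is purely bookkeeping: correctly matching the Haar-system change of variables $\gamma_1\leftrightarrow{\gamma'}^{-1}$ and the two different conventions for the $A$-valued inner products (the "convolution" one on $C_c(G)$ versus the defining one on $C_c(G,r^*E)$), together with confirming that the $B$-balancing is exactly the equivalence relation generated by the pointwise $B$-action on sections — no genuinely hard analysis is needed beyond the density argument, which is standard.
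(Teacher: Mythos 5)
Your argument is essentially the paper's own proof: the paper defines the same map $j(\xi\otimes f)(\gamma)=\xi\circ r(\gamma)f(\gamma)$ on the algebraic tensor product, checks $\langle j(\xi),j(\eta)\rangle=\langle\xi,\eta\rangle$ and $j(\xi g)=j(\xi)g$, and extends to an isomorphism, with the reduced case treated as identical; your added density-of-range argument via \cite[Proposition 14.1]{fd:representations} fills in the surjectivity step the paper leaves implicit. The only blemish is notational: in your ``convolution-type'' formula $\int\overline{f(\gamma_1)}h(\gamma_1\gamma)\,d\lambda^{r(\gamma)}(\gamma_1)$ the variable $\gamma_1$ must range over the source fibre (i.e.\ the integral is against the image of $\lambda^{r(\gamma)}$ under inversion), not over $G^{r(\gamma)}$, but as your parenthetical remark indicates this is exactly the change of variables $\gamma_1={\gamma'}^{-1}$ and does not affect the computation.
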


\begin{proof}
We consider the case $A=C^*(G)$. The case $A=C_r^*(G)$  is similar. We define a map $j$ from the algebraic tensor product $C_c(G^{(0)},E)\otimes C_c(G)$ to $C_c(G,r^*E)$  by $j(\xi\otimes f)(\gamma)=\xi\circ r(\gamma) f(\gamma)$, where $\xi\in C_c(X,E)$, $f\in C_c(G)$ and $\gamma\in G$. Recall that the inner product on $C_c(G^{(0)},E)\otimes C_c(G)$ is defined by
$$<\xi\otimes f, \eta\otimes g>=<f,<\xi,\eta>_Bg>,\qquad \xi,\eta\in C_c(X,E)\quad f,g\in C_c(G).$$
It is easily checked that, for $\xi, \eta\in C_c(G^{(0)},E)\otimes C_c(G)$ and $g\in C_c(G)$, we have
$<j(\xi), j(\eta)>=<\xi,\eta>$ and $j(\xi g)=j(\xi)g$. Therefore $j$ extends to a C*-module isomorphism $j:{\mathcal F}\otimes_B A\ra C^*(G,r^*E)$. 
\end{proof}

\begin{proposition} The left action of $C_c(G)$ on $C_c(G,r^*E)$ extends to a non\-degenerate representation of $C^*(G)$ on $\mathcal E$ (resp. $C_r^*(G)$ on ${\mathcal E}_r$) by bounded adjointable operators.
\end{proposition}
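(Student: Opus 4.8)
The plan is to reduce the claim to Lemma~\ref{Le Gall}: once we know that $\mathcal E = C^*(G,r^*E)$ is isomorphic, as a right $A$-C*-module, to $\mathcal F\otimes_B A$ with $\mathcal F = C_0(G^{(0)},E)$ and $B = C_0(G^{(0)})$, the left action of $A$ is simply the one carried over from the external tensor-product description. Concretely, $A = C^*(G)$ is a $(B,A)$-correspondence via the inclusion $B = C_0(G^{(0)}) \hookrightarrow M(A)$, so $\mathcal F\otimes_B A$ carries a natural left action of $\mathcal L_B(\mathcal F)$, and in particular of $B$ acting by multiplication on $\mathcal F$. But that is not yet the left action of all of $A$. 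The point is that the left $C_c(G)$-action defined above on $C_c(G,r^*E)$ is, under the isomorphism $j$ of Lemma~\ref{Le Gall}, the convolution action, and we must show it extends boundedly.

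First I would verify at the algebraic level that $j$ intertwines the two left actions: for $h\in C_c(G)$, $\xi\in C_c(G^{(0)},E)$, $f\in C_c(G)$, compute $h\cdot j(\xi\otimes f)$ using the formula $f\xi(\gamma)=\int f(\gamma')L(\gamma')\xi({\gamma'}^{-1}\gamma)\,d\lambda^{r(\gamma)}(\gamma')$ and check it equals $j$ applied to the action of $h$ on $\mathcal F\otimes_B A$, where $A$ acts on the right tensor factor through the left regular-type action coming from $B\subset M(A)$ combined with the $G$-module structure on $\mathcal F$. The cleanest route is Le Gall's: the $G$-Hilbert bundle structure makes $\mathcal F$ into a $(C_c(G),B)$-bimodule in a way that is compatible with the $B$-$A$ bimodule $A$, so the left $C_c(G)$-module structure on $\mathcal F\otimes_B A$ is well-defined; then one checks $j$ is $C_c(G)$-linear by a direct substitution, the integrals matching term by term after the change of variables $\gamma'\mapsto{\gamma'}^{-1}\gamma$.

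Next, boundedness. Since $j$ is already known to be an isometric isomorphism of right $A$-C*-modules, it suffices to show the left action of $C_c(G)$ on $\mathcal F\otimes_B A$ extends to a $*$-representation of $A = C^*(G)$ by adjointable operators. For the external tensor product: $C_c(G)$ acts on $\mathcal F\otimes_B A$ through a $*$-homomorphism into $\mathcal L_B(\mathcal F)\odot$(left multipliers of $A$); more precisely, the $G$-action on the bundle $E$ gives a $*$-representation of $C_c(G)$ on the Hilbert $B$-module $\mathcal F$ with values in $\mathcal L_B(\mathcal F)$ — this is exactly the statement that $\mathcal F$ is a $(C^*(G), B)$-correspondence, which is the classical descent of the $G$-equivariant structure — and tensoring an adjointable operator on $\mathcal F$ with the identity on $A$ yields an adjointable operator on $\mathcal F\otimes_B A$ of the same norm or less. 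Hence $\|h\cdot\zeta\|_{\mathcal F\otimes_B A}\le \|h\|_{C^*(G)}\|\zeta\|$, and the action extends to $A$ by continuity, with adjoint given by the action of $h^*$. Nondegeneracy follows because $C_c(G^{(0)},E)\cdot C_c(G)$ is dense in $\mathcal E$ and an approximate unit $(u_i)$ for $C^*(G)$ — which can be taken in $C_c(G)$ and whose restrictions approximate the unit of $M(B)$ — satisfies $u_i\cdot j(\xi\otimes f) \to j(\xi\otimes f)$; alternatively, nondegeneracy of the $G$-action on $E$ (every $\xi\circ r(\gamma)$ lies in the closed span of $L(\gamma')\eta(\cdots)$) gives it directly.

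The main obstacle is establishing that $\mathcal F = C_0(G^{(0)},E)$ genuinely is a $(C^*(G),B)$-C*-correspondence, i.e. that the left $C_c(G)$-action by the $G$-equivariant structure is by \emph{bounded adjointable} operators that extend to the full C*-norm. This is where the $G$-Hilbert bundle hypothesis (unitarity of each $L(\gamma)$) does its work: unitarity makes the module action $*$-preserving, which combined with a standard disintegration/positivity argument (as in the proof that $C^*(G)$ acts on $L^2$ of the Haar system) bounds the operator norm by the $C^*(G)$-norm. I would either quote this from \cite{leg:these} directly or reprove it by realizing the action, after tensoring with $A$, inside a multiple of the regular representation. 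Once that correspondence structure on $\mathcal F$ is in hand, the proposition is immediate from Lemma~\ref{Le Gall}.
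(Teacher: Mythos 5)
There is a genuine gap at the central step of your reduction. You claim that the $G$-equivariant structure on $E$ makes $\mathcal F=C_0(G^{(0)},E)$ into a $(C^*(G),B)$-correspondence, so that the left action on $\mathcal F\otimes_B A$ is of the form $T\otimes 1$ and boundedness comes for free. This is false for a general groupoid: an adjointable operator on the Hilbert $B$-module $\mathcal F$ is in particular $B$-linear, hence acts fiberwise over $G^{(0)}$, whereas the convolution-type action coming from the $G$-structure, $(f\cdot\xi)(x)=\int f(\gamma)L(\gamma)\xi\circ s(\gamma)\,d\lambda^x(\gamma)$, mixes fibers and is not $C_0(G^{(0)})$-linear, since $f\cdot(\xi b)$ involves $b\circ s(\gamma)$ while $(f\cdot\xi)b$ involves $b(x)=b\circ r(\gamma)$. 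Representations of $C^*(G)$ associated to a $G$-Hilbert bundle live on $L^2(G^{(0)},\mu,E)$ for a quasi-invariant measure $\mu$, not on $C_0(G^{(0)},E)$. Your construction only works when $G^{(0)}$ is a point, i.e.\ for groups. Correspondingly, under the isomorphism $j$ of Lemma \ref{Le Gall} (which is only an isomorphism of right $A$-modules), the transported left action of $C_c(G)$ genuinely entangles the two tensor factors; it is not ``an operator on $\mathcal F$ tensored with the identity on $A$,'' so the estimate $\|h\cdot\zeta\|\le\|h\|_{C^*(G)}\|\zeta\|$ does not follow from the general fact about $T\otimes 1$. You half-acknowledge this in your last paragraph, but the ``obstacle'' is not something to be quoted or reproved: the statement itself is not true, so the whole reduction collapses and the norm bound still has to be proved by hand.

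For comparison, the paper's proof obtains the bound by an induction-of-representations argument: for every representation $(\mu,H)$ of $G$, the representation of $C_c(G)$ induced through the bimodule $C_c(G,r^*E)$ from the integrated representation $\pi$ of $(\mu,H)$ is shown, via an explicit unitary $U$, to be equivalent to the integrated representation of $(\mu,E\otimes H)$; hence it is dominated by the full $C^*$-norm, which yields $\pi(\langle fe,fe\rangle)\le\|f\|^2_{C^*(G)}\,\pi(\langle e,e\rangle)$ for all such $\pi$ and therefore $\|fe\|\le\|f\|_{C^*(G)}\|e\|$. The reduced case is handled by observing that $E\otimes H$ is a regular $G$-Hilbert bundle whenever $H$ is, so the same estimate holds against $C^*_r(G)$. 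Adjointability is the direct computation $\langle f\xi,\eta\rangle=\langle\xi,f^*\eta\rangle$, and nondegeneracy uses an approximate identity of $C_c(G)$ in the inductive limit topology (your nondegeneracy remark is in the same spirit and is fine once a correct boundedness proof is in place). If you want to salvage your strategy, you would have to replace the false correspondence claim by an argument of this kind, or quote the descent construction of Le Gall's thesis, where the boundedness of the left action on $\mathcal F\otimes_B A$ is itself a theorem and not a formal consequence of tensoring.
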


\begin{proof} We first assume that $A=C^*(G)$. Our reference for representations of groupoids and their convolution algebras are \cite{ren:approach} and \cite{ren:representations}. It is clearer if we distinguish $\underline A=A$ acting on the left from $A$ acting on the right.
We first prove the inequality $\|f\xi\|\le \|f\|_{\underline A}\|\xi\|$ for $f\in C_c(G)$ and $\xi\in C_c(G,r^*E)$. Let $(\mu,H)$ be a representation of $G$: $\mu$ is a quasi-invariant measure and $H$ is a measurable $G$-Hilbert bundle. It defines the integrated representation $\pi$ of $A$ on $L^2(G^{(0)},\mu,H)$. Let us construct the representation $\pi'$ of $C_c(G)$ induced by $\pi$ through the bimodule $C_c(G,r^*E)$. I claim that it is unitarily equivalent to  the integrated representation $\underline\pi$ of $(\mu,E\otimes H)$, where $\gamma(e\otimes\xi)=L(\gamma)e\otimes\pi(\gamma)e$. Indeed, we first check that the linear map
$$U:C_c(G,r^*E)\otimes L^2(G^{(0)},H)\ra L^2(G^{(0)}, E\otimes H)$$ defined for $e\in C_c(G,r^*E)$ and $\xi\in L^2(G^{(0)},H)$ by
$$U(e\otimes\xi)(x)=\int e(\gamma)\otimes\pi(\gamma)\xi\circ s(\gamma)\delta^{-1/2}(\gamma)d\lambda^x(\gamma)$$
where $\delta$ is the Radon-Nikodym cocycle of $\mu$, satisfies
$$(U(e\otimes\xi)|U(f\otimes\eta))=(e\otimes\xi |f\otimes\eta)\defequal
(\xi|\pi(<e,f>)\eta).$$
Here is the computation
$$\begin{array}{cl}
&(e\otimes\xi |f\otimes\eta)\\
&=(\xi|\pi(<e,f>)\eta)\\
&=\int (\xi\circ r(\gamma)|<e,f>_r(\gamma)\pi(\gamma)\eta\circ s(\gamma))\delta^{-1/2}(\gamma)d\lambda^x(\gamma)d\mu(x)\\
&=\int(e(\gamma'^{-1})|f(\gamma'^{-1}\gamma))(\xi\circ r(\gamma)|\pi(\gamma)\eta\circ s(\gamma))\delta^{-1/2}(\gamma)d\lambda^x(\gamma')d\lambda^x(\gamma)d\mu(x)\\
&=\int(e(\gamma'^{-1})|f(\gamma))(\xi\circ r(\gamma')|\pi(\gamma'\gamma)\eta\circ s(\gamma))\delta^{-1/2}(\gamma'\gamma)d\lambda^{s(\gamma')}(\gamma)d\lambda^x(\gamma')d\mu(x)\\
&=\int(e(\gamma')|f(\gamma))(\pi(\gamma')\xi\circ s(\gamma')|\pi(\gamma)\eta\circ s(\gamma))\delta^{-1/2}(\gamma)d\lambda^{r(\gamma')}(\gamma)d\nu_0(\gamma')\\
&=\int(U(e\otimes\xi)(x)|U(f\otimes\eta)(x))d\mu(x)\\
&=(U(e\otimes\xi)|U(f\otimes\eta))\\
\end{array}$$
Let us check next that $U$ intertwines the induced representation $\pi'$ and the integrated representation $\underline\pi$. This is straightforward: for $f\in C_c(G), e\in C_c(G,r^*E)$ and $\xi\in L^2(G^{(0)},H)$, we have
$$U(fe\otimes\xi)(x)=\int f(\gamma')L(\gamma')e({\gamma'}^{-1}\gamma)\otimes\pi(\gamma)\xi\circ s(\gamma)\delta^{-1/2}(\gamma)d\lambda^x(\gamma')d\lambda^x(\gamma)$$
On the other hand
$$\begin{array}{ccl}
\underline\pi(f)U(e\otimes\xi)(x)&=&\int f(\gamma')\underline\pi(\gamma')U(e\otimes\xi)(s(\gamma')\delta^{-1/2}(\gamma')d\lambda^x(\gamma')\\
&=&\int f(\gamma')L(\gamma')e(\gamma)\otimes\pi(\gamma\gamma')\xi\circ s(\gamma)\delta^{-1/2}(\gamma\gamma')d\lambda^{s(\gamma')}(\gamma)d\lambda^x(\gamma')\\
&=&\int f(\gamma')L(\gamma')e({\gamma'}^{-1}\gamma)\otimes\pi(\gamma)\xi\circ s(\gamma)\delta^{-1/2}(\gamma)d\lambda^x(\gamma')d\lambda^x(\gamma)\\
\end{array}$$
Let ${\mathcal H}'$ be the Hilbert space of the induced representation $\pi'$. It is the separation-completion of $C_c(G)\otimes L^2(G^{(0)},H)$ with respect to the scalar product
$$(e\otimes\xi |f\otimes\eta)=(\xi|\pi(<e,f>)\eta)$$
Let $j: C_c(G)\otimes L^2(G^{(0)},H)\ra H'$ the corresponding map.
The representation $\pi$ extends to the right C*-module $C^*(G,r^*E)$: to $e\in C_c(G,r^*E)$ we associate $\pi(e)\in{\mathcal B}(L^2(G^{(0)},H),{\mathcal H}')$ such that $\pi(e)\xi=j(e\otimes\xi)$. Then, we have for $f,g\in C_c(G,r^*E)$, $\pi(g)^*\pi(f)=\pi(<g,f>_r)$. Since the integrated representation $\underline\pi$ is bounded with respect to the full norm, so is the induced representation $\pi'$: for $f\in C_c(G)$, $\|\pi'(f)\|\le \|f\|_{\underline A}$. For $f\in C_c(G)$ and $e\in C_c(G,r^*E)$, we have $\pi(fe)=\pi'(f)\pi(e)$. This gives the inequality
$$\pi(<fe,fe>)\le \|f\|^2_{\underline A}\pi(<e,e>)$$
and $\|fe\|\le \|f\|_{\underline A}\|e\|$.
\sk
Note that if $H$ is a regular $G$-Hilbert bundle, i.e. of the form $H_x=L^2(Z_x,\alpha_x)$ where $(Z,\alpha)$ is a principal $G$-space with $r$-system $\alpha$ (some multiplicity is allowed), then $E\otimes H$ is also a regular $G$-Hilbert bundle. This shows that our result is also valid for the reduced C*-algebra $A=C_r^*(G)$.
\sk
A straightforward computation shows that multiplication on the left by $f^*$ is the adjoint of the multiplication on the left by $f$: for $f\in C_c(G)$ and $\xi,\eta\in C_c(G,r^*E)$, we have
$<f\xi,\eta>=<\xi,f^*\eta>$. We have shown $(f,\xi)\mapsto f\xi$ defines a $*$-homomorphism of $\underline A$ into ${\mathcal L}_A({\mathcal E})$. This $*$-homomorphism is non\-degenerate: using a left approximate identity for $C_c(G)$ for the inductive limit topology as in \cite[Proposition 2.1.9]{ren:approach}, one obtains that the linear space of the $f\xi$'s, where $f\in C_c(G)$ and $\xi\in C_c(G,r^*E)$, is dense in the inductive limit topology, hence in the norm topology.
This concludes the proof.
\end{proof}

In other words, $C^*(G,r^*E)$ is a $(C^*(G), C^*(G))$-correspondence. Similarly,
$C_r^*(G,r^*E)$ is a $(C_r^*(G), C_r^*(G))$-correspondence. Let us give some properties of this construction. We consider on one hand the category ${\mathcal H}(G)$ of $G$-Hilbert bundles and, on the other the category ${\mathcal C}(A)$ of $(A,A)$-correspondences, where $A$ is a C*-algebra.  The objects of ${\mathcal H}(G)$ are $G$-Hilbert bundles $E,F,\ldots$; its morphisms are bounded equivariant continuous linear bundle maps $\varphi: E\ra F$, where bounded means that $\sup_{x\in X}\|\varphi_x\|<\infty$. The objects of ${\mathcal C}(A)$ are  correspondences ${\mathcal E}, {\mathcal F},\ldots$ as defined above; its morphisms are $(A,A)$-bimodule continuous  linear maps $J: {\mathcal E}\ra {\mathcal F}$. Given Hilbert $G$-Hilbert bundles $E,F$, one can define the tensor product $E\otimes F$. It is a $G$-Hilbert bundle, where, for $\gamma\in G$, $e\in E_{s(\gamma)}$ and $f\in F_{s(\gamma)}$, $L_{E\otimes F}(\gamma)(e\otimes f)=L_E(\gamma)e\otimes L_F(\gamma)f$.

\begin{theorem}\label{functor} The map $E\mapsto C^*(G,r^*E)$ (resp. $C_r^*(G,r^*E)$)  is a covariant functor from the category of $G$-Hilbert bundles to the category of $(C^*(G), C^*(G))$-correspondences (resp. $(C_r^*(G), C_r^*(G))$-correspondences)  which, up to isomorphism, transforms the tensor product of $G$-Hilbert bundles into the composition of correspondences.
\end{theorem}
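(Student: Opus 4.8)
The plan has two independent parts: the functoriality of $E\mapsto C^*(G,r^*E)$, and the compatibility with tensor products.

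\emph{Functoriality.} Given a morphism $\varphi:E\ra F$ in ${\mathcal H}(G)$, I would define $\varphi_*:C^*(G,r^*E)\ra C^*(G,r^*F)$ on the dense subspace $C_c(G,r^*E)$ by $(\varphi_*\xi)(\gamma)=\varphi_{r(\gamma)}(\xi(\gamma))$; since $\varphi$ is a continuous bundle map, $\varphi_*\xi$ is again a continuous compactly supported section of $r^*F$. That $\varphi_*$ commutes with the right $C_c(G)$-action and with the inner product is immediate from the defining formulas (no $L$ occurs there), and that it commutes with the left action follows from a one-line computation using the equivariance relation $L_F(\gamma')\varphi_{s(\gamma')}=\varphi_{r(\gamma')}L_E(\gamma')$. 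For boundedness I would invoke Lemma \ref{Le Gall}: pointwise application of $\varphi$ is an adjointable $B$-module map $\varphi_\#:{\mathcal F}_E\ra {\mathcal F}_F$ of norm $\sup_x\|\varphi_x\|$, its adjoint being pointwise application of the fiberwise adjoints $\varphi_x^*$; under the isomorphism ${\mathcal F}_E\otimes_B A\cong C^*(G,r^*E)$ the map $\varphi_*$ is carried to $\varphi_\#\otimes\mathrm{id}_A$, which is adjointable of the same norm. Hence $\varphi_*$ is a morphism of $(A,A)$-correspondences; $(\psi\varphi)_*=\psi_*\varphi_*$ and $(\mathrm{id}_E)_*=\mathrm{id}$ are clear, and everything works verbatim with $A=C_r^*(G)$.

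\emph{Monoidality.} Fix $E,F$ and recall that $E\otimes F$ carries the diagonal action $L_{E\otimes F}=L_E\otimes L_F$. The isomorphism $C^*(G,r^*(E\otimes F))\cong C^*(G,r^*E)\otimes_A C^*(G,r^*F)$ will be induced by the bilinear map
$$q:C_c(G,r^*E)\times C_c(G,r^*F)\ra C_c(G,r^*(E\otimes F)),\qquad q(\xi,\eta)(\gamma)=\int \xi(\gamma')\otimes L_F(\gamma')\eta({\gamma'}^{-1}\gamma)\,d\lambda^{r(\gamma)}(\gamma'),$$
whose right-hand side is readily seen to be a continuous compactly supported section of $r^*(E\otimes F)$. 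I would then verify, each time by a short change of variable using only the left-invariance of $\lambda$ and the fact that the $L(\gamma)$ are isometries, that (a) $q$ is $C_c(G)$-balanced, $q(\xi f,\eta)=q(\xi,f\eta)$; (b) $q$ is right $A$-linear, $q(\xi,\eta g)=q(\xi,\eta)g$; and (c) $q$ intertwines the left actions, $q(f\xi,\eta)=f\cdot q(\xi,\eta)$ for the diagonal action on the target. By (a), $q$ descends to a linear map $\bar q$ on the algebraic balanced tensor product, whose image is dense in $C^*(G,r^*E)\otimes_A C^*(G,r^*F)$.

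The crux is that $q$ preserves inner products, $\langle q(\xi,\eta),q(\xi',\eta')\rangle_A=\langle\eta,\langle\xi,\xi'\rangle_A\,\eta'\rangle_A$, the right-hand side being the inner product of the internal tensor product. Expanding both sides by the defining convolution formulas and using repeatedly the left-invariance of the Haar system, together with $L_F(\gamma)^*=L_F(\gamma^{-1})$ and $L_F(\gamma_1)L_F(\gamma_2)=L_F(\gamma_1\gamma_2)$, reduces one expression to the other. Granting this, $\bar q$ extends to an isometry $C^*(G,r^*E)\otimes_A C^*(G,r^*F)\ra C^*(G,r^*(E\otimes F))$, which is a bimodule map by (b) and (c). For surjectivity I would use the identity $q(j(\xi_0\otimes f),\eta)(\gamma)=\xi_0(r(\gamma))\otimes(f\eta)(\gamma)$, valid for $\xi_0\in C_c(G^{(0)},E)$, $f\in C_c(G)$ and any $\eta\in C_c(G,r^*F)$ (immediate since $r(\gamma')=r(\gamma)$ on the domain of integration), with $j$ as in Lemma \ref{Le Gall}. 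By the nondegeneracy of the left $C^*(G)$-action on $C^*(G,r^*F)$ proved in the preceding proposition, the closed span of the image of $\bar q$ then contains every section $\gamma\mapsto\xi_0(r(\gamma))\otimes\zeta(\gamma)$ with $\xi_0\in C_c(G^{(0)},E)$ and $\zeta\in C_c(G,r^*F)$; a partition-of-unity argument, using that a continuous section of $E\otimes F$ over $G^{(0)}$ is locally a uniform limit of finite sums of elementary tensors (\cite[Proposition 14.1]{fd:representations}), shows that such sections are dense in $C_c(G,r^*(E\otimes F))$ for the inductive-limit topology, hence in the module norm. Thus $\bar q$ is a unitary isomorphism of $(A,A)$-correspondences, natural in $E$ and $F$; the reduced case is identical, using the regularity remark in the proof of the preceding proposition.

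The main obstacle is the inner-product identity above: it is the only step that is neither formal nor a one-line substitution, and the real work is keeping track of the ranges and sources of the groupoid elements through the chain of applications of left-invariance. The density/surjectivity step is a close second.
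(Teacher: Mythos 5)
Your proposal is correct and follows essentially the same route as the paper: pointwise pushforward $\varphi_*$ made bounded via the identification of Lemma \ref{Le Gall} with $\Phi\otimes 1$, and the same integral formula (the paper's $j_c$) for the monoidal isomorphism, with inner-product preservation checked by a change-of-variables computation and surjectivity obtained from the sections $\gamma\mapsto\xi_0\circ r(\gamma)\otimes(f\eta)(\gamma)$ together with nondegeneracy of the left action and the Fell--Doran approximation argument. The extra checks you include (explicit balancedness, the partition-of-unity density step) only spell out details the paper leaves implicit.
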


\begin{proof} Let $\varphi: E\ra F$ be a morphism from a $G$-Hilbert bundle $E$ to a $G$-Hilbert bundle $F$.  The corresponding morphism  is given for $\xi\in C_c(G,r^*E)$, by $\varphi_*\xi=\varphi\circ\xi$. If we identify $C^*(G,r^*E)$ with $C_0(X,E)\otimes C^*(G)$ and $C^*(G,r^*F)$ with $C_0(X,F)\otimes C^*(G)$ as in lemma \ref{Le Gall}, the map $\varphi_*$ is the map $\Phi\otimes 1$, where $\Phi$ is the $C^*$-module morphism from $C_0(X,E)\ra C_0(X,F)$ defined by $\Phi(\xi)=\varphi\circ\xi$. It is well known (see \cite[page 42]{lan:C*}) that it extends to a C*-module morphism $\varphi_*: C^*(G,r^*E)\ra C^*(G,r^*F)$. Since $\varphi_*(f\xi)=f\varphi_*(\xi)$ for all $f\in C_c(G)$ and $\xi\in C_c(G,r^*E)$, $\varphi_*$ is a morphism of correspondences. The functorial properties are clear. Let $E,F$ be $G$-Hilbert bundles. Let us show that $C^*(G,r^*(E\otimes F))$ and the composition $C^*(G,r^*E)\otimes_{C^*(G)}C^*(G,r^*F)$ are isomorphic. We define the map
$$j_c: C_c(G,r^*E)\otimes C_c(G,r^*F)\ra C_c(G,r^*(E\otimes F))$$
by $$j_c(\xi\otimes\eta)(\gamma)=\int\xi(\gamma')\otimes L_F(\gamma')\eta(\gamma'^{-1}\gamma)d\lambda^{r(\gamma)}(\gamma').$$
 One checks easily that for all
$\xi\in C_c(G,r^*E), \eta\in C_c(G,r^*F)$ and $f\in C_c(G)$,
$$j_c((\xi\otimes\eta)f)=j_c(\xi\otimes\eta)f\quad\hbox{and}\quad j_c(f(\xi\otimes\eta))=f j_c(\xi\otimes\eta).$$
A straightforward (but lengthy) computation using changes of variables and order of integration and the left invariance of the Haar system gives  that for all
$\xi\in C_c(G,r^*E)$ and $\eta\in C_c(G,r^*F)$,
$$<j_c(\xi\otimes\eta), j_c(\xi\otimes\eta)>=<\xi\otimes\eta,\xi\otimes\eta>$$
where the second inner product is defined after Definition \ref{C*-correspondence}. Therefore $j_c$ extends to an inner product preserving linear map
$$j: C^*(G,r^*F)\otimes_{C^*(G)} C^*(G,r^*F)\ra C^*(G,r^*(E\otimes F)).$$
Let us show that it is onto. It suffices to show that its image is dense. For that, it suffices to show that the image of $j_c$ is dense in $C_c(G,r^*(E\otimes F))$ in the inductive limit topology. This is true because, by definition of $j_c$, its image contains the sections of the form $\gamma\mapsto \xi\circ r (\gamma)\otimes f\eta (\gamma)$, where $\xi\in C_c(X,E), \eta\in C_c(G,r^*F)$ and $f\in C_c(G)$ and, as we have seen earlier, the linear span of the $f\eta$'s is dense in $C_c(G,r^*F)$. By continuity, $j$ is a $(C^*(G),C^*(G))$-bimodule map. The proof is identical when we use instead the reduced C*-algebra $C_r^*(G)$ and the reduced C*-module $C_r^*(G,r^*E)$.
\end{proof}

\section{Derivations}

\subsection{The derivation defined by a cocycle}

Let us add to the previous data, namely a locally compact groupoid with Haar system $(G,\lambda)$ and a $G$-Hilbert bundle $E$, a continuous cocycle $c: G\ra r^*E$. We shall see that $c$ defines a derivation with values in the bimodule $C^*(G,r^*E)$ (or $C_r^*(G,r^*E)$).

\begin{definition} Let $A$ be a Banach algebra and let $\mathcal E$ be an $(A,A)$ bimdodule. A derivation with values in $\mathcal E$ is a linear map
$\partial:{\mathcal A}\ra {\mathcal E}$, where $\mathcal A$ is a dense subalgebra of $A$ satisfying: 
$$\forall a,b\in{\mathcal A},\quad \partial(ab)=a\partial(b)+\partial(a)b.$$
\end{definition}

\begin{proposition} Let $G,E$ and $c$ be as above. Then the map $\partial: C_c(G)\ra C^*(G,r^*E)$ [resp. $C_r^*(G,r^*E)$] defined by $\partial(f)(\gamma)=if(\gamma)c(\gamma)$ for all $f\in C_c(G)$ is a derivation.
\end{proposition}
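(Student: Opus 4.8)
The plan is to verify directly that the prescribed map $\partial(f)(\gamma) = if(\gamma)c(\gamma)$ satisfies the Leibniz rule with respect to the convolution product on $C_c(G)$ and the bimodule structure on $C_c(G,r^*E)$ described in Section 3. First I would record why $\partial$ is well-defined: since $f \in C_c(G)$ and $c: G \to r^*E$ is continuous, the section $\gamma \mapsto if(\gamma)c(\gamma)$ is a continuous section of $r^*E$ with compact support, hence lies in $C_c(G,r^*E)$, and clearly $\partial$ is linear. It then suffices to check the identity $\partial(f*g) = f\,\partial(g) + \partial(f)\,g$ pointwise as sections of $r^*E$, for $f,g \in C_c(G)$.

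Next I would expand both sides at a point $\gamma \in G$ using the definitions. On the left, $\partial(f*g)(\gamma) = i(f*g)(\gamma)\,c(\gamma) = i\left(\int f(\gamma')g({\gamma'}^{-1}\gamma)\,d\lambda^{r(\gamma)}(\gamma')\right) c(\gamma)$. On the right, using the left action $f\xi(\gamma) = \int f(\gamma')L(\gamma')\xi({\gamma'}^{-1}\gamma)\,d\lambda^{r(\gamma)}(\gamma')$ with $\xi = \partial(g)$, and the right action $\xi g(\gamma) = \int \xi(\gamma\gamma')g({\gamma'}^{-1})\,d\lambda^{s(\gamma)}(\gamma')$ with $\xi = \partial(f)$, one gets
\[
(f\,\partial(g))(\gamma) = i\int f(\gamma')\,g({\gamma'}^{-1}\gamma)\,L(\gamma')c({\gamma'}^{-1}\gamma)\,d\lambda^{r(\gamma)}(\gamma'),
\]
\[
(\partial(f)\,g)(\gamma) = i\int f(\gamma\gamma')\,c(\gamma\gamma')\,g({\gamma'}^{-1})\,d\lambda^{s(\gamma)}(\gamma').
\]
In the second integral I would substitute $\gamma' \mapsto {\gamma'}^{-1}\gamma$ (i.e. replace the integration variable so that $\gamma\gamma'$ becomes $\gamma'$), using left-invariance of the Haar system to rewrite $d\lambda^{s(\gamma)}$ as $d\lambda^{r(\gamma)}$, obtaining $i\int f(\gamma')\,c(\gamma')\,g({\gamma'}^{-1}\gamma)\,d\lambda^{r(\gamma)}(\gamma')$.

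The crux is then purely algebraic: in the common integrand over $\gamma' \in G^{r(\gamma)}$ we must have $c(\gamma) = c(\gamma') + L(\gamma')c({\gamma'}^{-1}\gamma)$, which is exactly the cocycle identity $c(\gamma'\cdot({\gamma'}^{-1}\gamma)) = c(\gamma') + L(\gamma')c({\gamma'}^{-1}\gamma)$ applied to the composable pair $(\gamma', {\gamma'}^{-1}\gamma)$. Hence the sum of the two right-hand integrands equals $if(\gamma')g({\gamma'}^{-1}\gamma)\,c(\gamma)$, and integrating over $\gamma'$ reproduces $\partial(f*g)(\gamma)$. I do not anticipate a genuine obstacle here; the only point requiring care is bookkeeping the change of variables and the use of left-invariance of $\lambda$ so that all three integrals are genuinely taken over the same fiber $G^{r(\gamma)}$ against the same measure, which is what allows the cocycle identity to be applied inside a single integral. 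This establishes the Leibniz rule on the dense subalgebra $C_c(G)$, so $\partial$ is a derivation with values in $C^*(G,r^*E)$ (and identically in $C_r^*(G,r^*E)$).
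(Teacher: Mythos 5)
Your verification is correct, and it is precisely the computation the paper leaves implicit: the paper's proof of this proposition consists only of the remark that it is ``a straightforward verification.'' Your use of left-invariance of the Haar system to bring all three integrals over the fiber $G^{r(\gamma)}$ and then apply the cocycle identity $c(\gamma)=c(\gamma')+L(\gamma')c({\gamma'}^{-1}\gamma)$ inside the integrand is exactly the intended argument.
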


\begin{proof} This is a straightforward verification.
\end{proof}

\begin{proposition} The derivation $\partial: C_c(G)\ra C_r^*(G,r^*E)$ defined by the cocycle $c:G\ra r^*E$ is closable.
\end{proposition}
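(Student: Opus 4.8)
The plan is to show closability by identifying the adjoint of $\partial$ on a large enough domain, or equivalently by showing that the graph of $\partial$ does not contain $(0,\eta)$ with $\eta \neq 0$. The cleanest route exploits the standard characterization: $\partial$ is closable if and only if whenever $f_n \to 0$ in $C_r^*(G)$ and $\partial(f_n) \to \eta$ in $C_r^*(G,r^*E)$, one has $\eta = 0$. So the first step is to test $\partial(f_n)$ against a rich family of vectors in the correspondence and against a rich family of representations, using the fact that the norm on $C_r^*(G,r^*E)$ is controlled by evaluation in regular representations (since $C_r^*$ is defined via regular $G$-Hilbert bundles). Concretely, I would use the isomorphism ${\mathcal E} = C_r^*(G,r^*E) \cong C_0(G^{(0)},E)\otimes_B C_r^*(G)$ from Lemma \ref{Le Gall} and work with the pairing $\langle j(\xi\otimes g), \partial(f_n)\rangle \in C_r^*(G)$ for $\xi \in C_c(G^{(0)},E)$, $g \in C_c(G)$.

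The key computation is that for $\xi \in C_c(G^{(0)},E)$ and $g \in C_c(G)$, the inner product $\langle j(\xi\otimes g), \partial(f)\rangle$ is, up to a fixed operator depending only on $\xi,g,c$, a multiple of $f$ itself in the convolution algebra. Indeed $\partial(f)(\gamma) = if(\gamma)c(\gamma)$, and plugging into the formula for the right inner product one gets an expression of the form $\langle j(\xi\otimes g),\partial(f)\rangle = i\, (g^* * h_{\xi,c}) * f$ or similar, where $h_{\xi,c}(\gamma) = (\xi\circ s(\gamma)\mid c(\gamma))$ — a fixed element of $C_c(G)$ or at least a bounded multiplier — so that the map $f \mapsto \langle j(\xi\otimes g),\partial(f)\rangle$ extends continuously to all of $C_r^*(G)$. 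Then if $f_n \to 0$ in $C_r^*(G)$ we get $\langle j(\xi\otimes g),\partial(f_n)\rangle \to 0$, while if $\partial(f_n)\to\eta$ we get $\langle j(\xi\otimes g),\partial(f_n)\rangle \to \langle j(\xi\otimes g),\eta\rangle$; hence $\langle j(\xi\otimes g),\eta\rangle = 0$. Since the $j(\xi\otimes g)$ span a dense subspace of ${\mathcal E}$ (shown in the proof of Lemma \ref{Le Gall} together with density of the $f\eta$'s), this forces $\eta = 0$.

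The main obstacle is the verification that $h_{\xi,c}$, the function $\gamma \mapsto (\xi\circ s(\gamma)\mid c(\gamma))_{r(\gamma)}$ built from the cocycle, genuinely defines a bounded left (or right) multiplier of $C_r^*(G)$ — or more precisely that $f \mapsto g^* * h_{\xi,c} * f$ is norm-continuous for the reduced norm. The cocycle $c$ need not be bounded, so $h_{\xi,c}$ need not lie in $C_c(G)$ after the relevant manipulations; one must be careful about where the compact-support cutoffs come from. The point is that $\xi$ has compact support in $G^{(0)}$ and $g$ has compact support in $G$, which localizes everything: after the change of variables the integrand is supported on a set of the form $r^{-1}(K)\cap(\text{supp }g)\cdot(\ldots)$ on which $c$ is bounded, so the resulting kernel is indeed in $C_c(G)$ and convolution by it is bounded in the reduced norm. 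I would carry out this localization carefully and then invoke the standard fact that convolution by a $C_c(G)$ element is bounded on $C_r^*(G)$ to finish.
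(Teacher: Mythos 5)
Your testing strategy is sound in outline (if $\langle v,\eta\rangle=0$ for all $v$ in a dense subset of the module, then $\eta=0$), but its key step fails: the map $f\mapsto\langle j(\xi\otimes g),\partial f\rangle$ does \emph{not} extend continuously to $C_r^*(G)$, and the formula $i\,(g^**h_{\xi,c})*f$ is not what the computation gives. From the definition of the inner product,
\begin{equation*}
\langle j(\xi\otimes g),\partial f\rangle(\gamma)=i\int \overline{g(\gamma'^{-1})}\,f(\gamma'^{-1}\gamma)\,\bigl(\xi\circ s(\gamma')\mid c(\gamma'^{-1}\gamma)\bigr)\,d\lambda^{r(\gamma)}(\gamma'),
\end{equation*}
that is, $\langle j(\xi\otimes g),\partial f\rangle=i\,g^**(f\,h_{\xi,c})$ with $h_{\xi,c}(\beta)=(\xi\circ r(\beta)\mid c(\beta))$ (note the pointwise product $f\,h_{\xi,c}$, not a convolution against $f$): the cocycle is evaluated at the same point as $f$, so the compact supports of $\xi$ and $g$ do not confine the argument of $c$; only the support of $f$ does, and that is not controlled along a sequence $f_n\to 0$ in $C_r^*(G)$. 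Splitting $c(\gamma'^{-1}\gamma)=c(\gamma'^{-1})+L(\gamma'^{-1})c(\gamma)$ does isolate one harmless term, a convolution $m^**f$ with a fixed $m\in C_c(G)$, but the remaining term pairs a module-type convolution of $f$ against the unbounded vector $c(\gamma)$ and is genuinely unbounded in the reduced norm. Already for $G=\R$, $E=\C$ with trivial action and $c(t)=t$, that term equals $\bar\zeta\, t\,(g^**f)(t)$; taking $f_n(t)=u(t-n)$ for a fixed bump $u$, the norms $\|f_n\|_{C_r^*(\R)}$ are constant while $\|t\,(g^**f_n)\|$ is of order $n$ (its Fourier transform contains the summand $n\,\widehat{g^*}\hat u$). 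So the functional you need is unbounded and the argument collapses --- as one should expect, since a module pairing against fixed vectors cannot absorb an unbounded derivation.

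What is missing is a pairing that localizes, and that is exactly what the paper's proof supplies. It fixes a quasi-invariant measure $\mu$, sets $\nu=\mu\circ\lambda$, and embeds $C_c(G)$ and $C_c(G,r^*E)$ into the Hilbert spaces $L^2(G,\nu^{-1})$ and $L^2(G,r^*E,\nu^{-1})$; there $\partial$ becomes the multiplication operator by $ic$, which is closable because pairing against a compactly supported square-integrable section only sees $c$ on a compact set --- precisely the localization the $C_r^*(G)$-valued inner product lacks. Then the derivation identity $\partial(f_n*g)=(\partial f_n)g+f_n(\partial g)$, pushed into $L^2$ and combined with closability of the multiplication operator, gives $L_\mu(\eta)j(g)=0$ for all $g\in C_c(G)$, hence $L_\mu(\eta)=0$, and faithfulness of the family of regular representations $(L_\mu)$ on $C_r^*(G,r^*E)$ yields $\eta=0$. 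If you wish to keep your testing idea, the test functionals must come from such Hilbert-space (regular representation) pairings rather than from the C*-module inner product against the vectors $j(\xi\otimes g)$.
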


\begin{proof}
Let $\mu$ be a quasi-invariant measure for $(G,\lambda)$. We introduce the measure $\nu=\mu\circ\lambda$. Let $L_{\mu}$ be the left regular representation on $H=L^2(G,\nu^{-1})$ given by $L_\mu(f)g=f*g$. Consider also the Hilbert space $K=L^2(G,r^*E,\nu^{-1})$ of square integrable sections. We denote by $j:C_c(G)\ra H=L^2(G,\nu^{-1})$ and also by $j: C_c(G,r^*E)\ra K= L^2(G,r^*E,\nu^{-1})$ the linear embeddings. For $\xi\in C_c(G,r^*E)$, the map $j(g)\mapsto j(\xi g)$ extends to a bounded operator $L_\mu(\xi): L^2(G,\nu^{-1})\ra L^2(G,r^*E,\nu^{-1})$ and $L_\mu(\xi)$ is still defined for $\xi\in C_r^*(G,r^*E)$.  Multiplication by $ic$ defines an unbounded linear map 
$$\partial_\mu: C_c(G)\subset L^2(G,\nu^{-1})\ra L^2(G,r^*E,\nu^{-1})$$
such that $j\circ\partial=\partial_\mu\circ j$.
Since it is a multiplication operator, it is closable. 

Let $(f_n)$ be a sequence in $C_c(G)$ which tends to $0$ in $C_r^*(G)$ and such that $(\partial f_n)$ tends to $\xi$ in $C_r^*(G,r^*E)$. We will show that $\xi=0$.

Let $g\in C_c(G)$. We have:
$$\partial (f_n*g)=(\partial f_n)g+ f_n(\partial g).$$
Taking the image by the map $j: C_c(G,r^*E)\ra K= L^2(G,r^*E,\nu^{-1})$, we obtain
$$\partial_\mu\circ j(f_n*g)=j((\partial f_n)g)+ j(f_n(\partial g)).$$
The term $j((\partial f_n)g)=L_\mu(\partial f_n)j(g)$ tends to $L_\mu(\xi)j(g)$ and the term $j(f_n(\partial g))=L_\mu(f_n)j(g)$ tends to 0. Since $j(f_n*g)=L_\mu(f_n)j(g)$ tends to 0 and $\partial_\mu$ is closable, $L_\mu(\xi)j(g)=0$. This implies that $L_\mu(\xi)=0$. Since the family of representations $(L_\mu)$, where $\mu$ runs over all quasi-invariant measures, is faithful on $C_r^*(G,r^*E)$, we obtain $\xi=0$.

\end{proof}

\subsection{A construction of Sauvageot}

In the theory of non-commutative Dirichlet forms, J.-L. Sauvageot has given in the following construction of a derivation from a semi-group of completely prositive contractions. 

Let $(T_t)_{t\ge 0}$ be a strongly continuous semi-group of completely positive contractions  of a C*-algebra $A$. We denote by $-\Delta$ its generator (it is a dissipation). We assume that $\mathcal A$ is a dense sub $*$-algebra in its domain. The associated Dirichlet form is the sesquilinear map ${\mathcal L}:{\mathcal A}\times {\mathcal A}\ra A$ defined by
$${\mathcal L}(\alpha,\beta)={1\over 2}[\alpha^*\Delta(\beta)+\Delta(\alpha^*)\beta-\Delta(\alpha^*\beta)].$$
One can prove that ${\mathcal L}$ is completely positive in the sense that
$$\forall n\in\N, \forall \alpha_1,\ldots,\alpha_n\in{\mathcal A}, [{\mathcal L}(\alpha_i,\alpha_j)]\in M_n(A)^+.$$

The positivity of the non-commutative Dirichlet form gives the following GNS representation.
\begin{theorem}\cite{sau:tangent}\label{Sauvageot} Let $(T_t)_{t\ge 0}$ be a semi-group of CP contractions  of a C*-algebra $A$ as above and let $\mathcal L$ be its associated Dirichlet form. Then 
\begin {enumerate} 
\item There exists an $(A,A)$-correspondence $\mathcal E$ and a derivation $\partial:{\mathcal A}\ra {\mathcal E}$ such that
\begin{itemize}
\item for all $\alpha,\beta\in{\mathcal A}$, ${\mathcal L}(\alpha,\beta)=<\partial(\alpha),\partial(\beta)>_A$;
\item the range of $\partial$ generates $\mathcal E$ as a right Banach $A$-module.
\end{itemize}
\item  If $({\mathcal E}',\partial')$ is another pair satisfying the same properties, there exists a C*-bimodule isomorphism $u: {\mathcal E}\ra {\mathcal E}'$ such that $\partial'=u\circ \partial$.
\end{enumerate}
\end{theorem}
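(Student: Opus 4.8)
The strategy is to run a GNS/Stinespring-type construction directly from the completely positive sesquilinear form $\mathcal L$, using its complete positivity where one would normally use positivity of a single state. Let $\tilde A$ denote the unitization of $A$ and form the algebraic tensor product $\mathcal A\otimes_{\mathrm{alg}}\tilde A$, a right $\tilde A$-module via $(\alpha\otimes a)b=\alpha\otimes ab$. On it I put the $\tilde A$-valued sesquilinear form
$$\langle\alpha\otimes a,\beta\otimes b\rangle\defequal a^*\mathcal L(\alpha,\beta)b,$$
extended sesquilinearly. Positivity of this form is exactly the complete positivity hypothesis on $\mathcal L$: for $\eta=\sum_i\alpha_i\otimes a_i$ one has $\langle\eta,\eta\rangle=\sum_{i,j}a_i^*\mathcal L(\alpha_i,\alpha_j)a_j\ge 0$ because $[\mathcal L(\alpha_i,\alpha_j)]\in M_n(A)^+$. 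Passing to the quotient by the null submodule and completing in the norm $\|\eta\|=\|\langle\eta,\eta\rangle\|^{1/2}$ gives a right Hilbert $\tilde A$-module $\mathcal E$. I take $\partial(\alpha)=[\alpha\otimes 1]$, so that $\partial(\alpha)b=[\alpha\otimes b]$, whence $\langle\partial(\alpha),\partial(\beta)\rangle=\mathcal L(\alpha,\beta)$ by construction, the elements $\partial(\alpha)b$ are total in $\mathcal E$, and a standard approximate–unit argument shows that $\mathcal E$ is already generated as a right $A$-module by the range of $\partial$.

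The left action is dictated by the Leibniz rule $\partial(\gamma\alpha)=\gamma\partial(\alpha)+\partial(\gamma)\alpha$, which forces, for $\gamma\in\mathcal A$,
$$\pi(\gamma)(\alpha\otimes a)=(\gamma\alpha)\otimes a-\gamma\otimes(\alpha a).$$
Multiplicativity of $\pi$, commutation with the right $\tilde A$-action, and the formal adjoint relation $\langle\pi(\gamma)\xi,\eta\rangle=\langle\xi,\pi(\gamma^*)\eta\rangle$ on $\mathcal A\otimes_{\mathrm{alg}}\tilde A$ are all routine algebraic verifications; granting them, once $\pi$ extends to a representation of $A$, the map $\partial$ is a derivation into the $(A,A)$-correspondence $\mathcal E$ (restricting the right action to $A$).

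The real obstacle is that extension, i.e. the boundedness $\langle\pi(\gamma)\eta,\pi(\gamma)\eta\rangle\le\|\gamma\|^2\langle\eta,\eta\rangle$ in $A$ — here one genuinely needs complete positivity of the semigroup $(T_t)$, not merely positivity of $\mathcal L$. Expanding $\langle\pi(\gamma)(\alpha\otimes a),\pi(\gamma)(\alpha\otimes a)\rangle$ one gets $a^*B\,a$ with
$$B=\mathcal L(\gamma\alpha,\gamma\alpha)-\mathcal L(\gamma\alpha,\gamma)\alpha-\alpha^*\mathcal L(\gamma,\gamma\alpha)+\alpha^*\mathcal L(\gamma,\gamma)\alpha,$$
and I would bound $B$ by passing back to finite time. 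Writing $\Gamma_t(\alpha,\beta)=\alpha^*\beta-\alpha^*T_t(\beta)-T_t(\alpha)^*\beta+T_t(\alpha^*\beta)$, so that $\mathcal L(\alpha,\beta)=\lim_{t\to0}\tfrac1{2t}\Gamma_t(\alpha,\beta)$, a short cancellation collapses the finite-time version of $B$ into $S_t(\gamma^*\gamma)$, where
$$S_t(x)=T_t(\alpha^*x\alpha)-\alpha^*T_t(x\alpha)-T_t(\alpha^*x)\alpha+\alpha^*T_t(x)\alpha.$$
The point is that $S_t$ is a completely positive map: for $x=y^*y$ one has $S_t(y^*y)=w^*\,(\mathrm{id}\otimes T_t)(v^*v)\,w\ge 0$, with $v=(y\alpha,\,y)$ a row over $A$, $w=(1,\,-\alpha)^{\mathsf t}$ a column, and $\mathrm{id}\otimes T_t$ positive by $2$-positivity of $T_t$. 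Hence, extending $S_t$ to the unitization, $S_t(\gamma^*\gamma)\le\|\gamma\|^2 S_t(1)$, and $S_t(1)=\Gamma_t(\alpha,\alpha)+\alpha^*(T_t(1)-1)\alpha\le\Gamma_t(\alpha,\alpha)$ since $T_t(1)\le 1$; dividing by $2t$ and letting $t\to0$ gives $B\le\|\gamma\|^2\mathcal L(\alpha,\alpha)$, and the same computation with $n\times n$ amplifications (using complete positivity of $T_t$) handles a general $\eta=\sum_i\alpha_i\otimes a_i$. Thus $\pi$ extends to a $*$-homomorphism $A\to\mathcal L_A(\mathcal E)$, and $(\mathcal E,\partial)$ is the required pair.

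Uniqueness is the usual argument: given $(\mathcal E',\partial')$ I define $u(\partial(\alpha)b)=\partial'(\alpha)b$ on the total submodule of such elements; since both sides have inner product $b^*\mathcal L(\alpha,\beta)d$, $u$ preserves inner products, hence extends to an isometric $C^*$-module map $\mathcal E\to\mathcal E'$, which is onto because the $\partial'(\alpha)b$ are total and is a left-module map because $u(\gamma\partial(\alpha))=u(\partial(\gamma\alpha)-\partial(\gamma)\alpha)=\partial'(\gamma\alpha)-\partial'(\gamma)\alpha=\gamma\partial'(\alpha)$ on a dense set. So $u$ is the desired $C^*$-bimodule isomorphism with $\partial'=u\circ\partial$.
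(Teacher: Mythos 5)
Your construction is correct, and it cannot be compared with a proof in the paper because the paper gives none: Theorem \ref{Sauvageot} is quoted from \cite{sau:tangent}, and what you have written is essentially Sauvageot's original argument — the GNS-type completion of $\mathcal A\otimes_{\mathrm{alg}}\tilde A$ with respect to the completely positive form $\mathcal L$, the left action forced by the Leibniz rule, and, as the only non-routine point, the bound $\langle\pi(\gamma)\eta,\pi(\gamma)\eta\rangle\le\|\gamma\|^2\langle\eta,\eta\rangle$ obtained by collapsing the finite-time expression to the map $S_t$, whose positivity comes from $2$-positivity (and, for general $\eta=\sum_i\alpha_i\otimes a_i$, complete positivity) of $T_t$. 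Your cancellation $B_t=S_t(\gamma^*\gamma)$ and the chain $S_t(\gamma^*\gamma)\le\|\gamma\|^2S_t(1)\le\|\gamma\|^2\Gamma_t(\alpha,\alpha)$ check out, with the understood conventions that $T_t$ is extended to the unitization (or to $M(A)$) as a positive contraction so that $T_t(1)\le 1$ makes sense, and that $\mathcal A$ being a $*$-subalgebra of the domain guarantees the norm convergence $\frac{1}{2t}B_t\to B$; the generation and uniqueness arguments are the standard ones and are fine as stated.
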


\subsection{CNT functions and Sauvageot's construction}

Let $(G,\lambda)$ be a locally compact groupoid and let $\psi: G\ra\R$ be a continuous function conditionally of negative type. For $t\ge 0$, define $T_t: C_c(G)\ra C_c(G)$ by $T_t(f)(\gamma)=\exp(-t\psi(\gamma))f(\gamma)$.

\begin{proposition} Given a continuous CNT function $\psi:G\ra \R$
\begin{enumerate}
\item Pointwise multiplication by $e^{-t\psi}$, where $t\ge 0$ defines a completely positive contraction $T_t:C^*(G)\ra C^*(G)$ and $(T_t)_{t\ge 0}$ is a strongly continuous semi-group of completely positive contractions of $C^*(G)$.
\item Its generator is $-\Delta$ with the dense sub-$*$algebra ${\mathcal A}=C_c(G)$ as essential domain and with $\Delta f=\psi f$.
\end{enumerate}
\end{proposition}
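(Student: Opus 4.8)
The plan is to verify the three assertions --- complete positivity of each $T_t$, contractivity, and strong continuity of the semi-group --- and then identify the generator together with its essential domain.

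First I would establish complete positivity of $T_t$ by appealing to the correspondence between CNT functions and PT functions already developed in Section 2. For fixed $t>0$, Schoenberg's argument shows that $\gamma\mapsto \exp(-t\psi(\gamma))$ is a continuous positive type function on $G$: indeed, for each $x\in G^{(0)}$, the kernel $\psi_x$ on $G^x\times G^x$ is CNT, so by the classical Schoenberg lemma $\exp(-t\psi_x)$ is a positive type kernel on $G^x\times G^x$, which is exactly the statement that $\gamma\mapsto\exp(-t\psi(\gamma))$ is PT on $G$. It is then a standard fact (following, e.g., from Proposition \ref{PTchar}, by realizing $\exp(-t\psi)$ via a $G$-Hilbert bundle section and integrating a representation against that section, or equivalently from the fact that pointwise multiplication by a continuous PT function sends positive type functions on $G$ to positive type functions and is positivity-preserving on representations) that pointwise multiplication by a continuous PT function extends to a completely positive map on $C^*(G)$. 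Contractivity follows because $\psi$ vanishes on $G^{(0)}$, so $\exp(-t\psi)=1$ on the unit space; the value of a positive type function at a unit dominates its modulus elsewhere in the appropriate operator sense, and hence the multiplier norm of $T_t$ equals $\|\exp(-t\psi)|_{G^{(0)}}\|_\infty=1$. That $T_t$ maps $C_c(G)$ to $C_c(G)$ and extends to $C^*(G)$ is immediate since $e^{-t\psi}$ is a continuous bounded function and multiplication by it is a multiplier of the convolution algebra. The semi-group property $T_sT_t=T_{s+t}$ is obvious from $e^{-s\psi}e^{-t\psi}=e^{-(s+t)\psi}$, and $T_0=\mathrm{id}$.

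Next, strong continuity. For $f\in C_c(G)$ one has $T_t f - f = (e^{-t\psi}-1)f$, which converges to $0$ in the inductive limit topology on $C_c(G)$ as $t\to 0$ (the support stays inside a fixed compact set, and $e^{-t\psi}-1\to 0$ uniformly on compacta since $\psi$ is continuous), hence converges in the $C^*(G)$-norm. Since $C_c(G)$ is dense in $C^*(G)$ and the $T_t$ are uniformly bounded (each is a contraction), an $\varepsilon/3$ argument gives $\|T_t a - a\|\to 0$ for every $a\in C^*(G)$. This proves $(i)$.

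For $(ii)$, I would show that on $C_c(G)$ the difference quotient $\frac{1}{t}(f-T_t f)=\frac{1-e^{-t\psi}}{t}\,f$ converges, as $t\to 0^+$, to $\psi f$ in the inductive limit topology, hence in $C^*(G)$-norm: the functions $\frac{1-e^{-t\psi(\gamma)}}{t}$ are dominated by $\psi(\gamma)$ and converge pointwise (and uniformly on compacta) to $\psi(\gamma)$, while the support of $\psi f$ is contained in that of $f$. Therefore $C_c(G)$ is contained in the domain of the generator $-\Delta$ and $\Delta f=\psi f$ there. To see that $C_c(G)$ is a \emph{core} (essential domain), I would invoke the standard semi-group criterion: a dense subspace of the domain that is invariant under the semi-group is automatically a core. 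Here $C_c(G)$ is dense in $C^*(G)$ and $T_t(C_c(G))\subset C_c(G)$ since $e^{-t\psi}$ is continuous, so $C_c(G)$ is $(T_t)$-invariant, and the core criterion applies. Finally $C_c(G)$ is manifestly a $*$-subalgebra of $C^*(G)$. The main obstacle I anticipate is the clean deduction of complete positivity of $T_t$ from the groupoid Schoenberg phenomenon: one must be careful that ``PT function $\Rightarrow$ CP multiplier of $C^*(G)$'' is justified uniformly over all representations (equivalently, that the multiplier extends to the universal C*-algebra), which is where Proposition \ref{PTchar} and the tensoring construction of Section 3 do the real work; the contractivity bound also rests on the fact that for a PT function $\varphi$ on $G$ one has, in the relevant sense, $\|\varphi\|\le \|\varphi|_{G^{(0)}}\|_\infty$, which should be extracted from the representation $\varphi(\gamma)=(e\circ r(\gamma)\mid L(\gamma)e\circ s(\gamma))$ provided by that proposition.
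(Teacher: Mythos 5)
Your proposal is correct, and for part (i) it is essentially the paper's argument: the paper also reduces to the fact that $\varphi=e^{-t\psi}$ is a continuous positive type function bounded by $1$ (Schoenberg, applied fiberwise exactly as you do, though the paper leaves it implicit) and then invokes the known theorem that pointwise multiplication by such a function is a completely positive map of $C^*(G)$ with completely bounded norm at most $1$ (citing Ramsay--Walter and Renault's Fourier algebra papers, rather than re-deriving it from Proposition \ref{PTchar} as you sketch); your closing caveat correctly identifies where the real work sits. The differences are in the routine parts of (i)--(ii). For strong continuity the paper says it ``can be checked by looking at the coefficients of an arbitrary representation'' (weak continuity plus the standard weak-implies-strong fact for contraction semi-groups), whereas you argue directly via convergence in the inductive limit topology on $C_c(G)$ together with density and the uniform bound $\|T_t\|\le 1$; your route is more self-contained. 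For the core, the paper and you both first identify $\Delta f=\psi f$ on $C_c(G)$ by the same difference-quotient computation, but then the paper applies Bratteli's criterion (Theorem 1.5.2 of \cite{bra:der}), checking that $\{(1+\lambda\psi)f:\ f\in C_c(G)\}$ is dense in $C^*(G)$ for $\lambda>0$, while you use the invariance criterion: a dense subspace of the domain invariant under the semi-group is a core, the invariance $T_t(C_c(G))\subset C_c(G)$ being obvious since $e^{-t\psi}$ is continuous and multiplication does not enlarge supports. Both criteria are standard and both hypotheses are verified here, so the two proofs are equally valid; the range-density check buys independence from the semi-group invariance observation, while your argument avoids any resolvent computation.
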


\begin{proof} Let us fix $t\ge 0$. Then $\varphi=e^{-t\psi}$ is a continuous function of positive type bounded by 1. It is well known (see \cite[Theorem 4.1]{rw:Fourier} or \cite[Proposition 2.3]{ren:Fourier}) that pointwise multiplication by $\varphi$ defines a completely positive linear map $T_t$ from $C^*(G)$ into itself with completely bounded norm not greater than 1. The semi-group property is clear. The strong continuity can be checked by looking at the coefficients of an arbitrary representation of $C^*(G)$. For $f\in C_c(G)$, $\displaystyle {e^{-t\psi}-1\over t}f$ tends to $-\psi f$ as $t\to 0^+$ in the inductive limit topology, hence in $C^*(G)$. To prove that ${\mathcal A}=C_c(G)$ is an essential domain, we use \cite[Theorem 1.5.2]{bra:der}: we check that for $\lambda>0$, $\{(1+\lambda\psi)f, f\in C_c(G)\}$ is dense in $A$, which is clear.
\end{proof}

Here is the expression of the Dirichlet form of this example: for $f,g\in C_c(G)$, we have
$${\mathcal L}(f,g)(\gamma)=\int {1\over 2}[\psi(\gamma)-\psi(\gamma'^{-1}\gamma)-\psi(\gamma')]\overline{f(\gamma'^{-1})}g(\gamma'^{-1}\gamma)d\lambda^{r(\gamma)}(\gamma').$$

We want to compare Sauvageot's construction with the GNS representation of Theorem \ref{GNS}. The latter construction yields a real Hilbert bundle; we complexify it and still denote it by $E$.

\begin{theorem}\label{Sauvageot pair} Up to isomorphism, the Sauvageot pair $({\mathcal E},\partial)$ constructed from the CNT function $\psi$ as above is $(C^*(G,r^*E), \partial')$, where $(E,c)$ is the GNS representation of $\psi$ given in Theorem \ref{GNS} and $\partial'$ is the associated derivation.
\end{theorem}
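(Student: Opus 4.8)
The plan is to invoke the uniqueness part of Theorem \ref{Sauvageot} (Sauvageot's GNS-type theorem): it suffices to show that the pair $(C^*(G,r^*E),\partial')$ satisfies the two defining properties of a Sauvageot pair for the Dirichlet form $\mathcal L$ computed above, namely that ${\mathcal L}(f,g)=\langle\partial'(f),\partial'(g)\rangle_A$ for all $f,g\in C_c(G)$ and that the range of $\partial'$ generates $C^*(G,r^*E)$ as a right Banach $A$-module. Since we have already shown (in the previous subsection) that $\partial'(f)(\gamma)=if(\gamma)c(\gamma)$ is a genuine derivation with values in the correspondence $C^*(G,r^*E)$, the theorem will follow from these two verifications.

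First I would verify the inner-product identity. Using the formula for the right inner product on $C_c(G,r^*E)$ recalled in Section 3, I compute, for $f,g\in C_c(G)$,
$$\langle\partial'(f),\partial'(g)\rangle(\gamma)=\int \big(i f(\gamma'^{-1})c(\gamma'^{-1})\,\big|\,i g(\gamma'^{-1}\gamma) c(\gamma'^{-1}\gamma)\big)_{s(\gamma')}\,d\lambda^{r(\gamma)}(\gamma').$$
Here I would use the cocycle identity $c(\gamma\gamma')=c(\gamma)+L(\gamma)c(\gamma')$ to expand $c(\gamma'^{-1}\gamma)$ relative to $c(\gamma'^{-1})$ and $c(\gamma)$; writing $\gamma'^{-1}\gamma = \gamma'^{-1}\cdot\gamma$ gives $c(\gamma'^{-1}\gamma)=c(\gamma'^{-1})+L(\gamma'^{-1})c(\gamma)$, and since $L(\gamma'^{-1})$ is the linear isometry $\vec E_{r(\gamma')}\to\vec E_{s(\gamma')}$, the three resulting inner products are $\|c(\gamma'^{-1})\|^2=\psi(\gamma'^{-1})=\psi(\gamma')$, $\|c(\gamma)\|^2=\psi(\gamma)$, and a cross term $(c(\gamma'^{-1})\,|\,L(\gamma'^{-1})c(\gamma))=(L(\gamma')c(\gamma'^{-1})\,|\,c(\gamma))=-(c(\gamma')\,|\,c(\gamma))$ using $L(\gamma')c(\gamma'^{-1})=-c(\gamma')$ (from the cocycle identity applied to $\gamma'\gamma'^{-1}=r(\gamma')$). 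The polarization $-2\,\mathrm{Re}(c(\gamma')\,|\,c(\gamma)) = \psi(\gamma'^{-1}\gamma)-\psi(\gamma')-\psi(\gamma)$ then converts the integrand into $\tfrac12[\psi(\gamma)-\psi(\gamma'^{-1}\gamma)-\psi(\gamma')]\overline{f(\gamma'^{-1})}g(\gamma'^{-1}\gamma)$, which is exactly the integrand in the formula for ${\mathcal L}(f,g)(\gamma)$. (The factor $i\cdot\overline{i}=1$ disappears, and one must be slightly careful about which scalar is conjugated, but this is bookkeeping.)

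Second, for the generation property: the range of $\partial'$ consists of the sections $\gamma\mapsto if(\gamma)c(\gamma)$, and multiplying on the right by $g\in C_c(G)$ produces, by the formula for the right action, sections built from $c$ and Haar-integration. The claim is that the closed right $A$-submodule they generate is all of $C^*(G,r^*E)$. This should follow from property (b) of Theorem \ref{GNS} — that $\{c(\gamma):\gamma\in G^x\}$ is total in $\vec E_x$ for each $x$ — together with a density argument in the inductive limit topology analogous to the ones used repeatedly in Section 3 (e.g. in the proof of Theorem \ref{functor}): the sections $if(\gamma)c(\gamma)$ already span, fiberwise at each $\gamma$, a dense subset as $f$ varies, and right multiplication by $C_c(G)$ together with nondegeneracy lets one recover a dense subspace of $C_c(G,r^*E)$ in the inductive limit topology, hence in norm. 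I would also note the uniqueness clause of Theorem \ref{Sauvageot} then supplies the bimodule isomorphism $u$ intertwining $\partial$ and $\partial'$.

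The main obstacle is the bookkeeping in the first step: getting the placement of complex conjugates, the factor of $\tfrac12$ from $\psi/2$, and the direction of the isometries $L(\gamma')$ versus $L(\gamma'^{-1})$ all consistent, so that the cross term comes out with the correct sign and the identity matches the Dirichlet form formula on the nose rather than up to a sign or a factor. The generation property is comparatively routine given the totality statement in Theorem \ref{GNS} and the approximate-identity technique already established for $C^*(G)$; the real work is the cocycle computation.
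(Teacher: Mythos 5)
Your proposal is correct and is essentially the paper's own proof: compute $\langle\partial' f,\partial' g\rangle$ from the inner-product formula on $C_c(G,r^*E)$, use the cocycle identity to reduce the integrand to the kernel $(c(\gamma'^{-1})\,|\,c(\gamma'^{-1}\gamma))=\tfrac12[\psi(\gamma')+\psi(\gamma'^{-1}\gamma)-\psi(\gamma)]$ and hence to the Dirichlet form, prove the generation property from the totality of $\{c(\gamma):\gamma\in G^x\}$ together with an inductive-limit density argument (the paper makes this precise via the subspace $\Lambda=\{(fc)g: f,g\in C_c(G)\}$ and \cite[Proposition 14.1]{fd:representations}), and conclude by the uniqueness clause of Theorem \ref{Sauvageot}. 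On the sign bookkeeping you flagged: your intermediate identities are the right ones and yield $\tfrac12[\psi(\gamma')+\psi(\gamma'^{-1}\gamma)-\psi(\gamma)]$, which is exactly what ${\mathcal L}(f,g)$ is when computed from $\tfrac12[f^*\Delta(g)+\Delta(f^*)g-\Delta(f^*g)]$ with $\Delta f=\psi f$; the opposite sign in your final bracket merely reproduces a sign slip already present in the paper's displayed formula for ${\mathcal L}$, so the intended equality ${\mathcal L}={\mathcal L}'$ is unaffected.
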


\begin{proof}
We compute the Dirichlet form associated to $(C^*(G,r^*E), \partial')$:
$$\begin{array}{ccl}
{\mathcal L}'(f,g)(\gamma)&=&<\partial'f,\partial' g>\\
&=&\int [(c(\gamma^{-1}|c(\gamma'^{-1}\gamma))]\overline{f(\gamma'^{-1})}g(\gamma'^{-1}\gamma)d\lambda^{r(\gamma)}(\gamma').\\
\end{array}$$
An easy computation gives the equality
$${1\over 2}[\psi(\gamma)-\psi(\gamma'^{-1}\gamma)-\psi(\gamma')]=(c(\gamma^{-1})|c(\gamma'^{-1}\gamma))$$
hence the equality of the Dirichlet forms ${\mathcal L}$ and ${\mathcal L}'$. Let us show that the range of $\partial'$ generates $C^*(G,r^*E)$ as a right C*-module. Consider the subspace $\Lambda=\{(fc)g: f, g\in C_c(G)\}$ of $C_c(G,r^*E)$. By definition, 
$$(fc)g(\gamma)=\int c(\gamma\gamma')f(\gamma\gamma')g(\gamma'^{-1})d\lambda^{s(\gamma)}(\gamma').$$
Since $c(G^{r(\gamma)})$ is total in $E_{r(\gamma)}$, 
$\Lambda(\gamma)=\{(fc)g(\gamma): f,g\in C_c(G)\}$ is dense in $E_{r(\gamma)}$. Moreover, for $f\in C_c(G)$ and $\xi\in\Lambda$, $\gamma\mapsto f(\gamma)\xi(\gamma)$ belongs to $\Lambda$.\cite[Proposition 14.1]{fd:representations} (or rather its proof) shows that $\Lambda$ is dense in $C_c(G,r^*E)$ in the inductive limit topology, hence in
$C^*(G,r^*E)$ in the C*-module topology. We conclude by Theorem \ref{Sauvageot} $(ii)$. 
\end{proof}

\bibliographystyle{amsplain}

\end{document}